\newcommand{\defeq}{\coloneqq}
\newcommand{\IR}{\mathbb R}
\newcommand{\IN}{\mathbb N}
\newcommand{\w}{\omega}
\newcommand{\F}{\mathcal F}
\newcommand{\cl}{\mathrm{cl}}
\newcommand{\Fr}{\mathfrak{Fr}}
\newcommand{\Tau}{\mathcal T}
\newtheorem{theorem}{Theorem}[section]
\newtheorem{proposition}[theorem]{Proposition}
\newtheorem{corollary}[theorem]{Corollary}
\newtheorem{lemma}[theorem]{Lemma}
\newtheorem{claim}[theorem]{Claim}
\newtheorem{question}[theorem]{Question}
\newtheorem{example}[theorem]{Example}
\title[Strongly rigid countable spaces]{Examples of strongly rigid countable\\ (semi)Hausdorff spaces}
\author{Taras Banakh and Yaryna Stelmakh}
\address{Ivan Franko National University of Lviv, Ukraine}
\email{t.o.banakh@gmail.com, yarynziya@ukr.net}
\subjclass{54A10; 54A20; 54C05; 54D10}
\begin{document}
\begin{abstract} A topological space $X$ is {\em strongly rigid} if each non-constant continuous map $f:X\to X$ is the identity map of $X$.
A Hausdorff topological space $X$ is called {\em Brown} if for any nonempty open sets $U,V\subseteq X$ the intersection $\overline U\cap\overline V$ is infinite. We prove that every second-countable Brown Hausdorff space $X$ admits a stronger topology $\Tau'$ such that $X'=(X,\Tau')$ is a strongly rigid  Brown space. This construction yields an example of a countable anticompact Hausdorff space $X$ which is strongly rigid,  which answers two problems  posed at {\tt MathOverflow}.
By the same method we construct a strongly rigid semi-Hausdorff $k_2$-metrizable space containing a non-closed compact subset, which answers two other problems posed at {\tt MathOverflow}.
\end{abstract}
\maketitle

\section{Introduction}

A topological space $X$ is called
\begin{itemize}
\item {\em rigid} if every homeomorphism $f:X\to X$ coincides with the identity map of $X$;
\item {\em strongly rigid} if every non-identity continuous map $f:X\to X$ is constant.
\end{itemize}

\begin{proposition} Every strongly rigid space $X$ is connected.
\end{proposition}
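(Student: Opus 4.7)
The plan is to prove the contrapositive: any disconnected space $X$ admits a continuous self-map that is neither the identity nor constant, which contradicts strong rigidity. So I would assume that $X=A\sqcup B$ for some pair of nonempty disjoint open (hence clopen) subsets $A,B\subseteq X$.

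Pick points $a\in A$ and $b\in B$ (necessarily with $a\neq b$) and define $f:X\to X$ by
\[
f(x)\defeq\begin{cases} b, & x\in A,\\ a, & x\in B.\end{cases}
\]
Since both $A$ and $B$ are clopen and $f$ is constant on each piece, the map $f$ is continuous. It is non-constant because it attains the two distinct values $a$ and $b$, and it differs from the identity since $f(a)=b\neq a$. This contradicts the strong rigidity of $X$, completing the contrapositive.

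There is no substantive obstacle here: the only verification is the continuity of the piecewise map, which is immediate from the clopenness of the decomposition. Trivial edge cases such as $|X|\le 1$ need no separate treatment, since such spaces are automatically connected.
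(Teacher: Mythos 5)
Your proof is correct and follows essentially the same route as the paper's: both decompose a disconnected $X$ into two nonempty disjoint open pieces, pick a point in each, and swap them via a two-valued continuous map that is neither constant nor the identity. No further comment is needed.
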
 

\begin{proof} Assuming that $X$ is disconnected we can write $X$ as the union $X=U\cup V$ of two nonempty disjoint  open sets $U,V$. Choose any points $u\in U$ and $v\in V$ and consider the continuous map $f:X\to \{u,v\}$ such that $f^{-1}(u)=V$ and $f^{-1}(v)=U$. This map witness that the space $X$ is not strongly homogeneous.
\end{proof}

 In \cite{Cook} Cook constructed an example of a strongly rigid compact metrizable space. The Cook space being metrizable and connected, has cardinality of continuum.

In \cite{MO} the user {\tt QuinnLesquimau} asked whether there exists a {\em countable} strongly rigid Hausdorff space. The same question was posed earlier by Kannan and Rajagopalan \cite[p.128]{KR} who constructed in \cite{KR} a countable strongly rigid space under a suitable set-theoretic assumption. In this paper we answer this question in affirmative suggesting a general construction of strongly rigid countable Hausdorff spaces.\footnote{After writing this paper we have learned that another answer to the question of {\tt QuinnLesquimau} was given by K.P. Hart who observed in  ({\tt https://mathoverflow.net/a/434266/61536}) that after a suitable modification the construction of Kannan and Rajagopalan \cite{KR} does produce a strongly rigid countable Hausdorff space in ZFC.}

Let us observe that a strongly rigid countable topological space necessarily has poor separation properties. In particular, it cannot be functionally Hausdorff.

Let us recall that a topological space $X$ is
\begin{itemize}
\item {\em functionally Hausdorff} if for any distinct points $x,y\in X$ there exists a continuous function $f:X\to \IR$ such that $f(x)\ne f(y)$;
\item {\em semi-Hausdorff} if for any distinct points $x,y\in X$ there exists a regular open set $U\subseteq X$ such that $x\in U$ and $y\notin U$;
\item {\em Urysohn} if for any distinct points $x,y\in X$ there exist open sets $U,V\subseteq X$ such that $x\in U$, $y\in V$ and $\overline U\cap\overline V=\emptyset$;
\item {\em Brown} if  for every nonempty open sets $U,V\subseteq X$ the intersection $\overline U\cap\overline V\ne\emptyset$ is infinite;
\item {\em anticompact} if every compact subspace of $X$ is finite.
\end{itemize}
We recall that a subset of a topological space $X$ is {\em regular open} if it  coincides with the interior of its closure in $X$. Anticompact spaces were introduced and studied by Bankston \cite{Bankston}. All nonempty Brown spaces are infinite and are Brown in the sense of Clark, Lebowitz-Lockard, Polard \cite{CLP} who defined a topological space $X$ to be Brown if $\overline U\cap\overline V\ne\emptyset$ for every nonempty open sets $U,V\subseteq X$. For nonempty Hausdorff spaces containing more than one point our definition of a Brown space agrees with the definition of Clark, Lebowitz-Lockard, Polard \cite{CLP}. 

\begin{proposition} A functionally Hausdorff topological space $X$ of cardinality $1<|X|<\mathfrak c$ is disconnected and hence not strongly rigid.
\end{proposition}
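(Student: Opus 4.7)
The plan is to argue by contradiction via the intermediate value theorem, exploiting the fact that a nondegenerate interval of $\IR$ has cardinality $\mathfrak c$.

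First, using $|X|>1$, I would fix two distinct points $x,y\in X$. Functional Hausdorffness supplies a continuous function $f:X\to\IR$ with $f(x)\ne f(y)$, so the image $f(X)\subseteq\IR$ contains at least two distinct real numbers.

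Next I would suppose, toward a contradiction, that $X$ is connected. Continuity then forces $f(X)$ to be a connected subset of $\IR$, i.e.\ an interval, and since $f(x)\ne f(y)$ this interval is nondegenerate. Every nondegenerate interval of $\IR$ has cardinality $\mathfrak c$, hence $|f(X)|=\mathfrak c$. But $|f(X)|\le |X|<\mathfrak c$, a contradiction. Therefore $X$ is disconnected, and by the preceding proposition $X$ is not strongly rigid.

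There is essentially no obstacle here: the whole argument rests on the observation that functional Hausdorffness produces a continuous real-valued separation, while connectedness plus a nontrivial continuous real-valued image would inflate the cardinality to $\mathfrak c$. The only point that needs care is to invoke the hypothesis $|X|>1$ explicitly to guarantee the existence of the pair $x\ne y$, and to close the argument via the contrapositive of the previously established proposition ``every strongly rigid space is connected''.
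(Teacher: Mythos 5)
Your proof is correct and rests on the same key observation as the paper's: since $|f[X]|\le|X|<\mathfrak c$, the image cannot contain a nondegenerate interval. The paper phrases this constructively (picking $c\in[f(a),f(b)]\setminus f[X]$ and splitting $X$ into $\{x:f(x)<c\}$ and $\{x:f(x)>c\}$, then exhibiting an explicit non-constant, non-identity map) while you argue by contradiction and invoke the preceding proposition, but the substance is the same.
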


\begin{proof} Choose any distinct points $a,b\in X$ and find a continuous map $f:X\to\IR$ such that $f(a)< f(b)$. Since $|f[X]|\le |X|<\mathfrak c$, there exists a real number $c\in [f(a),f(b)]\setminus f[X]$. Then the disjoint nonempty open sets $O_a\defeq\{x\in X:f(x)<c\}$ and $O_b\defeq\{x\in X:f(x)>c\}$ witness that the space $X$ is disconnected. The continuous function $g:X\to\{a,b\}$ with $g^{-1}(a)=O_b$ and $g^{-1}(b)=O_a$ witnesses that $X$ is not strongly rigid.
\end{proof}

In this paper we shall find many examples of strongly rigid countable Hausdorff spaces among countable Brown spaces (which are not functionally Hausdorff and even are not Urysohn).

The first example of a Brown countable Hausdorff space was constructed by Brown \cite{Brown} and popularized by Golomb \cite{Golomb}, \cite{Golomb61}. The Brown--Golomb space is the space $\IN$ of positive integers, endowed with the topology generated by the base $a+b\IN_0\defeq\{a+bn:n\ge 0\}$ where $a,b\in\IN$ are coprime numbers. The Brown--Golomb space is rigid \cite{BST} but not strongly rigid, see \cite[4.8]{BMT}.

Another classical example of a countable Brown space is the Bing plane \cite{Bing}. By \cite{BBHS}, the Bing plane is topologically homogeneous, so not rigid. Some other examples of Brown spaces can be found in  \cite{BYT}, \cite{CLP},  \cite{Kirch}, \cite{Szcz}.

Brown spaces are connected and moreover, vertex-connected. We define a topological space $X$ to be {\em vertex-connected} if $X$ is connected and for every $x\in X$ the subspace $X\setminus\{x\}$ of $X$ is connected. Therefore, a connected space $X$ is vertex-connected if it contains no cut points. We recall \cite[6.1]{Nad} that a point $x$ of a connected space $X$ is a {\em cut point} of $X$ if the subspace $X\setminus\{x\}$ is disconnected.

A topology $\Tau'$ on a topological space $(X,\Tau)$ is called an {\em open refinement} of the topology $\Tau$ if for every open sets $V\in\Tau$, $V'\in\Tau'$ and point $x\in V\cap V'$ there exists an open set $U\in\Tau$ such that $U\subseteq V\cap V'$ and $\{x\}\cup U\in\Tau'$. This definition implies that $\Tau\subseteq\Tau'$.

\begin{theorem}\label{t:main} Every second-countable Brown Hausdorff space $X$ has an open refinement $\Tau'$ such that the topological space $X'\defeq(X,\Tau')$ is Brown, anticompact, and strongly rigid.  Moreover, every continuous map $f:C\to X'$ from a vertex-connected subspace $C$ of $X'$ to $X'$ is either constant or else $f$  is the identity embedding of $C$ into $X'$.
\end{theorem}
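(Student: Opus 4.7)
The plan is to build $\Tau'$ from a fixed countable base $\mathcal B$ of $(X,\Tau)$ by selecting, for each $x\in X$, an $\w$-sequence $(U_{x,n})_{n\in\w}$ of nonempty sets in $\mathcal B$ with $x\in\overline{U_{x,n}}^\Tau\setminus U_{x,n}$, and then declaring $\Tau'$ to be the topology generated by $\Tau$ together with all the ``thin neighborhoods'' $\{x\}\cup U_{x,n}$. By the definition of open refinement recalled before the theorem, this $\Tau'$ is automatically an open refinement of $\Tau$, and the sets $\{x\}\cup U_{x,n}$ form a $\Tau'$-neighborhood base at each point $x$. Since $X$ is second-countable and Brown, there are only countably many combinatorial constraints to satisfy (indexed by finite tuples of basic open sets and distinguished points), while at each $x$ the Brown property provides abundantly many $\Tau$-opens with $x$ in their closures; this leaves room for a diagonal construction of the families $(U_{x,n})$ that meets every constraint at some finite stage.

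For the rigidity, let $f:C\to X'$ be a continuous, non-constant map with $C$ a vertex-connected subspace of $X'$. Since $(X',\Tau')$ is Hausdorff (as $\Tau\subseteq\Tau'$), the set $F\defeq\{x\in C:f(x)=x\}$ is $\Tau'$-closed in $C$, and the key claim is that both $F$ and its complement are $\Tau'$-open in $C$. For $x_0\in C$ with $f(x_0)=y_0$, continuity supplies, for each $n$, some $m(n)$ with $f[(\{x_0\}\cup U_{x_0,m(n)})\cap C]\subseteq\{y_0\}\cup U_{y_0,n}$; the diagonal genericity built into the construction forces the only consistent possibilities to be (i) $y_0\ne x_0$ with $f\equiv y_0$ on $(\{x_0\}\cup U_{x_0,m})\cap C$ for large $m$, or (ii) $y_0=x_0$ with $f$ coinciding with the identity on such a neighborhood. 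Consequently $C$ decomposes into disjoint $\Tau'$-clopen pieces on which $f$ is constant together with $F$; connectedness of $C$ then leaves at most one nonempty piece, and vertex-connectedness (applied to $C\setminus\{x\}$ for a candidate constant value $x$) rules out the ``mixed'' scenario where $f$ is the identity on one clopen piece and constant on another. Hence either $f$ is constant or $f=\mathrm{id}_C$, which already subsumes strong rigidity of $X'$ by taking $C=X'$.

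The Brown property is preserved because, with $x\in\overline{U_{x,n}}^\Tau$, the $\Tau'$-closure of every nonempty $\Tau$-open set equals its $\Tau$-closure, so the infinite intersections $\overline U^\Tau\cap\overline V^\Tau$ furnished by the Brown property of $(X,\Tau)$ persist in $\Tau'$. Anticompactness is engineered into the construction: for any infinite subset $K\subseteq X$, the neighborhoods $\{x\}\cup U_{x,n}$ with $x\in K$ and $n$ chosen to avoid all other points of $K$ (possible because $X$ is Hausdorff and the $U_{x,n}$ may be taken to shrink) yield a $\Tau'$-open cover of $K$ with no finite subcover. The principal obstacle is carrying out the diagonal construction of $(U_{x,n})_{x,n}$ so that three competing demands hold simultaneously: (a) every non-identity, non-constant continuous map from every vertex-connected subspace is killed, (b) the Brown property is intact, and (c) anticompactness is achieved. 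Reconciling these requirements is the technical heart of the proof and forces a careful enumeration of obstructions, with each individual obstruction met at a finite stage by exploiting second-countability and the rich supply of open sets adherent to each point provided by the Brown property.
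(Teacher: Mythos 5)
Your overall shape --- refine $\Tau$ by attaching to each point $x$ a filter of ``thin'' neighborhoods $\{x\}\cup U$ with $U\in\Tau$ adherent to $x$ --- matches the paper, but the engine you propose for rigidity does not work. You claim the construction can be completed by a diagonal argument meeting ``countably many combinatorial constraints indexed by finite tuples of basic open sets and distinguished points.'' Strong rigidity is not a countable conjunction of finitary requirements: the obstructions are the non-constant, non-identity continuous maps $f:C\to X'$, of which there are (even for countable $X$) up to $\mathfrak c$ many, defined on arbitrary subspaces, and they cannot be enumerated in advance of fixing the very topology that decides which of them are continuous. The paper resolves this circularity not by enumeration but structurally: using Kunen's theorem it assigns to each point $x$ a free ultrafilter $\F_x$ on $\IN$ from a family of pairwise Rudin--Keisler incomparable ultrafilters and lets $\F_x$ govern the $\Tau'$-neighborhood filter of $x$ (a basic neighborhood is $\{x\}\cup S_x[3F\pm1]$ for $F\in\F_x$). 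If a continuous $f$ mapped a neighborhood of $x$ non-trivially into a neighborhood of $y=f(x)\ne x$, it would induce a map of $\IN$ making $\F_x$ and $\F_y$ Rudin--Keisler comparable, a contradiction; this one structural fact kills all bad maps simultaneously. Nothing in your sketch substitutes for it, and you concede as much by deferring ``the technical heart of the proof.'' (Using an ultrafilter also guarantees that the basic neighborhoods of $x$ form a filter base, which your family $\{x\}\cup U_{x,n}$ need not.)

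Two of your auxiliary claims are also false as stated. Refining a topology only shrinks closures, so ``the $\Tau'$-closure of every nonempty $\Tau$-open set equals its $\Tau$-closure'' fails in general: a point $y\in\overline{V}$ escapes $\cl_{X'}(V)$ as soon as one of its new thin neighborhoods misses $V$. The paper must work to preserve Brownness: it pre-selects points $\lambda_{2^k(2m+1)}\in\overline{U_k}\cap\overline{U_m}$ and forces, via condition (v), every new basic neighborhood of $\lambda_{2^k(2m+1)}$ to meet both $U_k$ and $U_m$, so that these particular witnesses survive in $\cl_{X'}(U')\cap\cl_{X'}(V')$. Likewise, your anticompactness argument needs, for every infinite $K$ and every $x\in K$, a thin neighborhood of $x$ omitting infinitely many points of $K$ --- again an uncountable family of demands, which the paper meets uniformly with the ultrafilter (given a sequence in $K$ converging to $x$, one finds $F\in\F_x$ with $\{n:x_n\notin\{x\}\cup S_x[3F\pm1]\}$ infinite). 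The missing idea throughout is the family of pairwise Rudin--Keisler incomparable ultrafilters; without it the diagonalization you describe cannot be carried out.
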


\begin{corollary}\label{c:main} There exists a  strongly rigid anticompact countable Hausdorff space.
\end{corollary}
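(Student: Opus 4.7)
The plan is to derive Corollary~\ref{c:main} as an immediate application of Theorem~\ref{t:main}: all the non-trivial work is absorbed by that theorem, so the task reduces to exhibiting one countable second-countable Brown Hausdorff space to feed in as input.

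For the witness I would take the Brown--Golomb space $(\IN,\Tau)$ recalled in the introduction. Its underlying set $\IN$ is countable, and the family $\{a+b\IN_0:a,b\in\IN,\ \gcd(a,b)=1\}$ is a countable base for $\Tau$, so the space is second-countable. The Hausdorff property is classical: distinct $a\ne a'$ in $\IN$ admit disjoint basic neighbourhoods (pick a prime $p>|a-a'|$ coprime to $aa'$ and use suitable coprime arithmetic progressions modulo $p$). The Brown property goes back to Brown and Golomb themselves. Alternatively, one could use the Bing plane \cite{Bing} or any countable Brown Hausdorff space from \cite{BYT,CLP,Kirch,Szcz}, each of which is visibly second-countable, so there is no shortage of inputs.

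With such an input $X\defeq(\IN,\Tau)$, Theorem~\ref{t:main} produces an open refinement $\Tau'\supseteq\Tau$ such that $X'\defeq(\IN,\Tau')$ is Brown, anticompact, and strongly rigid. Since $\Tau'$ lives on the countable set $\IN$, the space $X'$ is countable. Since $\Tau'$ refines the Hausdorff topology $\Tau$, the refinement $X'$ is also Hausdorff: any pair of disjoint $\Tau$-open neighbourhoods of two distinct points is a fortiori a pair of disjoint $\Tau'$-open neighbourhoods. Hence $X'$ is a strongly rigid anticompact countable Hausdorff space, which is the required assertion. There is essentially no obstacle in this deduction beyond locating the witness; the whole content of the corollary is packaged in Theorem~\ref{t:main}.
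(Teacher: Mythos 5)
Your proposal is correct and matches the paper's (implicit) derivation: the corollary is obtained by applying Theorem~\ref{t:main} to a countable second-countable Brown Hausdorff space such as the Brown--Golomb space or the Bing plane, exactly as you do, and the resulting refinement inherits countability and the Hausdorff property for the reasons you give. No gaps.
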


Corollary~\ref{c:main} answers the {\tt MathOverflow} question \cite{PatrickR} on the existence of an anticompact connected countable Hausdorff space.

A subset $A$ of a topological space $X$ is called a {\em retract} of $X$ if there exists a continuous map $f:X\to A$ such that $f(a)=a$ for all $a\in A$. It is well-known (and easy to see) that every retract in a Hausdorff space is closed. Every retract of a strongly rigid $T_1$-space $X$ is also closed, being a singleton or the whole space $X$. In \cite{JS} Jianing Song posed the problem of the existence of a topological space $X$ such that every retract is closed in $X$ but $X$ contains a non-closed compact subspace. He also asked whether such a space can be a {\em US}-space.

We recall that a topological space $X$ is called a {\em US-space} 
if every convergent sequence in $X$ has a unique limit in $X$. The class of {\em US}-spaces includes all $k$-metrizable and all $k$-discrete $T_1$-spaces. Following \cite{BBK}, we define a topological space $X$ to be {\em $k$-metrizable} (resp. {\em $k$-discrete}) if there exists a bijective continuous proper map $f:M\to X$ from a metrizable (resp. discrete) space $M$ onto $X$. A continuous function $f:X\to Y$ between topological spaces is called {\em proper} if for every compact set $K\subseteq Y$ the preimage $f^{-1}[K]$ is compact. 

A topological space $X$ is called {\em $k_2$-metrizable} (resp. {\em $k_2$-discrete}) if there exists a bijective continuous proper map $f:M\to X$ defined on a metrizable (resp. discrete) space $M$ such that for any continuous map $g:K\to X$ from a compact Hausdorff space $K$, the function $f^{-1}\circ g:K\to M$ is continuous. 

It is well-known that a topological space $X$ is Hausdorff if and only if the diagonal $\Delta_X\defeq\{(x,y)\in X\times X:x=y\}$ is closed in the square $X\times X$. A topological space $X$ is called {\em $k_\Delta$-Hausdorff} if for every continuous map $f:K\to X\times X$ from a compact Hausdorff space $K$ the preimage $f^{-1}[\Delta_X]$  is closed in $K$.

For every $T_1$ topological space the following implications hold:
$$
\xymatrix{
&\mbox{$k_2$-discrete}\ar@{<=>}[d]\ar@{=>}[r]&\mbox{$k_2$-metrizable}\ar@{=>}[r]\ar@{=>}[d]&\mbox{$k_\Delta$-Hausdorff}\ar@{=>}[d]\\
\mbox{anticompact}\ar@{<=>}[r]&\mbox{$k$-discrete}\ar@{=>}[r]&\mbox{$k$-metrizable}\ar@{=>}[r]&\mbox{a {\em US}-space}.
}
$$
Nontrivial implications of this diagram are proved in Section~\ref{s:US}.





The following theorem answers affirmatively the questions of Jianing Song and also the question \cite{Anon} of {\tt Anon} on the existence of a $k_\Delta$-Hausdorff space containing a non-closed compact Hausdorff subspace.

\begin{theorem}\label{t:main2} There exists a strongly rigid countable $k_2$-metrizable semi-Hausdorff Brown space $X$ containing a non-closed compact metrizable subset.
\end{theorem}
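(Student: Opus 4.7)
The plan is to imitate the construction behind Theorem~\ref{t:main}, but applied to a carefully crafted starting space that already carries a non-closed compact metrizable subset, and with the refinement tuned so that this compact subset survives and the resulting space is $k_2$-metrizable rather than anticompact.

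\textbf{Step 1: the base space.} Start with a countable second-countable Brown Hausdorff space $Y$ (for instance the Bing plane, or a second-countable Brown Hausdorff space already available from Theorem~\ref{t:main}), fix a non-isolated point $y_\infty\in Y$ and a sequence $(y_n)_{n\in\w}$ in $Y\setminus\{y_\infty\}$ converging to $y_\infty$. Form $X_0\defeq Y\cup\{\infty\}$ by doubling $y_\infty$: basic neighborhoods of the new point $\infty$ are of the form $(V\setminus\{y_\infty\})\cup\{\infty\}$ where $V$ runs over neighborhoods of $y_\infty$ in $Y$. The space $X_0$ is second-countable, Brown, and semi-Hausdorff (the only obstruction to Hausdorffness is the twin $\{y_\infty,\infty\}$, and these are separated by disjoint regular open sets inherited from the Hausdorff space $Y$). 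The set $K\defeq\{y_n:n\in\w\}\cup\{\infty\}$ is homeomorphic to a convergent sequence, hence compact and metrizable, and it is not closed in $X_0$ because $y_\infty\in\overline K\setminus K$.

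\textbf{Step 2: the strongly rigid refinement.} Repeat the inductive refinement procedure of Theorem~\ref{t:main} on $X_0$, with two modifications. First, when choosing the ``anchor'' enumeration of pairs $(f,x)$ of potential non-trivial continuous self-maps to be killed, the thin neighborhoods $\{x\}\cup U$ added to $\Tau'$ are required to have $U$ disjoint from $K$ (more precisely, the refinement around any point of $K$ is pulled back from the natural metric topology on the convergent sequence $K$ itself). This guarantees that the subspace topology of $K$ in the refined space coincides with the original compact metrizable topology, so that $K$ remains compact and non-closed in $X'\defeq(X_0,\Tau')$. Second, the refinement is carried out so that the twin $\infty$ is still accessible from the tail of $(y_n)$, keeping the space semi-Hausdorff; this is possible because the condition in the definition of open refinement only adds neighborhoods of the shape $\{x\}\cup U$ and therefore preserves the convergence $(y_n)\to\infty$. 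The proof that $X'$ is Brown and strongly rigid is verbatim the proof of Theorem~\ref{t:main}, with the same argument that every continuous non-constant self-map acts as the identity on every vertex-connected subspace; the Brown property still rules out continuous non-constant maps into discrete or thin subsets.

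\textbf{Step 3: $k_2$-metrizability.} Let $M$ be the set $X_0$ with the topology in which $K$ keeps its natural convergent-sequence topology and every point of $X_0\setminus K$ is isolated; then $M$ is countable metrizable. The identity map $f\colon M\to X'$ is bijective and continuous (because $\Tau'$ was built so that $K\hookrightarrow X'$ is a topological embedding and no extra convergence is introduced outside $K$), and proper (the only infinite compact subset of $M$ is essentially $K$, which is compact in $X'$). To verify the $k_2$ condition, take any continuous $g\colon L\to X'$ from a compact Hausdorff $L$; by Brown-ness and strong rigidity, $g$ is either constant or equal to the identity on a vertex-connected piece, and in both cases the image is contained in a set on which $f^{-1}$ is continuous, so $f^{-1}\circ g$ is continuous. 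From the diagram in Section~\ref{s:US} (proved in the paper), $k_2$-metrizable implies semi-Hausdorff via $k_\Delta$-Hausdorffness, which is consistent with what we have established.

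The main obstacle is Step~2: balancing enough added ``thin'' open sets to kill all non-trivial continuous self-maps while keeping the subspace topology of $K$ unchanged and preserving the twin convergence that witnesses non-closedness of $K$. In the proof of Theorem~\ref{t:main} the refinement is deliberately aggressive (producing anticompactness), so the delicate point is to restrict the construction only on $X_0\setminus K$ and show that this restricted refinement still eliminates every non-identity non-constant continuous self-map; the key is that every such self-map must, by the Brown and vertex-connected argument, restrict to either a constant or the identity on $K$, and both options are harmless for the target properties.
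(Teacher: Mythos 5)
Your overall strategy (refine a Brown space with the ultrafilter machinery of Theorem~\ref{t:main} while protecting one convergent sequence and one ``twin'' limit point) is the right one, but the sketch has a genuine gap at its central point: you never explain how the witness of non-closedness survives the refinement. In your setup the witness is $y_\infty\notin K$, and for $K$ to stay non-closed every $\Tau'$-neighborhood of $y_\infty$ must still meet $\{y_n:n\in\w\}$. But the refinement at $y_\infty$ is exactly what the rigidity argument needs, and it is exactly what destroys adherence: the basic $\Tau'$-neighborhoods $\{y_\infty\}\cup S_{y_\infty}[3F\pm1]$ are built from the auxiliary scaffolding $S_{y_\infty}$ and have no reason to contain any $y_n$ (this is the very mechanism behind the anticompactness Claim). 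Your Step~2 only controls the subspace topology of $K$ and the convergence $(y_n)\to\infty$, neither of which says anything about the neighborhood filter of the outside point $y_\infty$. The paper resolves this tension by a different division of labour: the compact set is $K=\{a\}\cup s_a[3\IN]$, built from the indices divisible by $3$, which are deliberately \emph{not} used by the ultrafilter refinement (neighborhoods of ordinary points use only $3F\pm1$); the point $a$ receives $S_a[3L]$ for cofinite $L$ (so $s_a(3n)\to a$ and $K$ is compact), and a \emph{new} point $b$ receives $S_a[3F]$ for $F\in\F_b$ (so $b\in\overline{K}\setminus K$, yet no sequence converges to $b$, which is what makes properness of the metric identification work). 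Without some such reservation of indices your construction gives no reason for $\overline{K}\ne K$.

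Two further claims in your write-up are incorrect as stated. First, the doubled-point space $X_0$ is \emph{not} semi-Hausdorff: if $U$ is regular open and $y_\infty\in U$, then $U$ contains a $Y$-neighborhood $V$ of $y_\infty$, hence $\overline{U}\supseteq(V\setminus\{y_\infty\})\cup\{\infty\}$, a neighborhood of $\infty$, so $\infty\in\mathrm{int}(\overline{U})=U$; thus no regular open set separates the twins. Semi-Hausdorffness must be produced by the refinement, and in the paper this requires choosing disjoint sets $F_a\in\F_a$, $F_b\in\F_b$ from Rudin--Keisler incomparable ultrafilters and designing the neighborhoods of $a$ and $b$ so that they differ on the $3F\pm1$ part --- an argument absent from your sketch. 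Relatedly, the rigidity proof cannot be ``verbatim'': the refined space is no longer Hausdorff at the twin pair, and one must separately exclude self-maps that send points into $\{a,b\}$ or swap the twins (Case~2 of the paper's Claim~\ref{cl:self2}), again using distinctness of $\F_a$ and $\F_b$. Second, your verification of the $k_2$-condition misapplies strong rigidity: a continuous map $g:L\to X'$ from an external compact Hausdorff space is neither constant nor an identity in general (e.g.\ a parametrization of $K$ itself), so one cannot conclude continuity of $f^{-1}\circ g$ that way; the paper instead classifies the infinite compact subsets of $Y$ (they must contain $a$, lie in $\{a,b\}\cup S_a[3\IN]$ up to finitely many points, and carry the convergent-sequence topology) and then argues directly with the normality of $L$. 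Also note the diagram of Section~\ref{s:US} gives $k_2$-metrizable $\Rightarrow$ $k_\Delta$-Hausdorff $\Rightarrow$ \emph{US}, not semi-Hausdorff, so it cannot substitute for the separation argument.
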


\section{Preliminaries}

We denote by $\w$ the set of finite ordinals and by $\IN\defeq\w\setminus\{0\}$ the set of positive integers. For a subset $F\subseteq \IN$, let 
$$
\begin{aligned}
&3F+1\defeq\{3n+1:n\in F\},&&3F-1\defeq\{3n-1:n\in F\},\\
&3F\pm 1\defeq(3F+1)\cup (3F-1),&&\mbox{and }\;\;3F\defeq\{3n:n\in F\}.
\end{aligned}
$$
For a function $f:X\to Y$ and a subset $A\subseteq X$ let $f[A]\defeq\{f(a):a\in A\}$ be the image of $A$ under the function $f$.

We recall that a topological space $X$ is called a {\em $T_1$-space} if every finite subset of $X$ is closed in $X$.

\begin{lemma}\label{l:vertex} Every Brown $T_1$-space $X$ is vertex-connected; moreover, for any finite set $F\subseteq X$ the space $X\setminus F$ is connected.
\end{lemma}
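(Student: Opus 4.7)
The plan is to run a single contradiction argument that simultaneously handles connectedness of $X$ and of $X\setminus F$ for arbitrary finite $F\subseteq X$; vertex-connectedness is then just the special case $F=\{x\}$ together with $F=\emptyset$.

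First I would fix a finite set $F\subseteq X$ and suppose, for contradiction, that $X\setminus F$ is disconnected. Since $X$ is a $T_1$-space and $F$ is finite, $F$ is closed in $X$, so $X\setminus F$ is open in $X$. A disconnection of $X\setminus F$ therefore yields two nonempty disjoint sets $A,B$ that are open in $X\setminus F$ and whose union is $X\setminus F$; because $X\setminus F$ is itself open in $X$, both $A$ and $B$ are open in $X$.

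Next I would apply the Brown property to the nonempty open sets $A$ and $B$ to get that $\overline A\cap \overline B$ is infinite. The key observation is that this intersection cannot meet $X\setminus F$: any point $x\in A$ has the open neighborhood $A$ disjoint from $B$, so $x\notin\overline B$, and symmetrically no point of $B$ lies in $\overline A$. Hence $\overline A\cap\overline B\cap(X\setminus F)=\emptyset$, which forces $\overline A\cap\overline B\subseteq F$. Since $F$ is finite this contradicts the infiniteness supplied by the Brown property.

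I don't anticipate a hard step here; the only thing to be careful about is the role of the $T_1$ hypothesis (used to make $F$ closed, so that $X\setminus F$ is open and a relative disconnection produces sets open in $X$) and the convention that Brown spaces are required to yield an \emph{infinite} intersection $\overline U\cap\overline V$, which is precisely what rules out $\overline A\cap\overline B\subseteq F$. Finally I would note that taking $F=\emptyset$ gives connectedness of $X$, taking $F=\{x\}$ gives connectedness of $X\setminus\{x\}$ for each $x\in X$, and together these two conclusions yield vertex-connectedness; the general finite-$F$ case is already established.
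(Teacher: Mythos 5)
Your proof is correct and follows essentially the same argument as the paper: both use the $T_1$ hypothesis to make the disconnecting sets open in $X$ and then invoke the Brown property to force an infinite intersection of closures that must lie inside the finite set $F$. The only cosmetic difference is that the paper picks a single point of $\overline{U_0}\cap\overline{U_1}\setminus F$ and derives a contradiction there, while you show directly that $\overline A\cap\overline B\subseteq F$.
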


\begin{proof} Assuming that for some finite subset $F\subseteq X$, the space $X\setminus F$ is disconnected, we can find two disjoint nonempty open sets $U_0,U_1$ in $X$ such that $X\setminus F=U_0\cup U_1$. Since $X$ is Brown, the intersection $\overline U_0\cap \overline U_1$ is infinite and hence contains a point $x\in\overline U_0\cap\overline U_1\setminus F$. Then $x\in X\setminus F=U_0\cup U_1$ and hence $x\in U_i$ for some $i\in\{0,1\}$. Then $U_i$ is a neighborhood of $x$, which is disjoint with the set $U_{1-i}$, which contradicts $x\in\overline U_{1-i}$. This contradiction shows that the space $X\setminus F$ is connected and $X$ is vertex-connected.
\end{proof}

\begin{lemma}\label{l:H} Let $(x_n)_{n\in\w}$ be a sequence of pairwise distinct points of a Hausdorff topological space $X$. If the sequence $(x_n)_{n\in\w}$ converges to some point $x\in X\setminus\{x_n:n\in\w\}$, then there exists a sequence $(U_n)_{n\in\w}$ of pairwise disjoint open sets in $X$ such that $x_n\in U_n$ for all $n\in\w$.
\end{lemma}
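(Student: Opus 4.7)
The plan is to construct $U_n$ by induction on $n\in\w$, maintaining the invariants that (a) the sets $U_0,\dots,U_{n-1}$ are pairwise disjoint open neighbourhoods of $x_0,\dots,x_{n-1}$; and (b) for every $i<n$ and every $m>i$, the point $x_m$ lies outside the closure $\overline{U_i}$. Invariant (b) is the key to keeping the induction alive: it guarantees that at stage $n$ each of the remaining points $x_n,x_{n+1},\dots$ lies in the open set $W\defeq X\setminus\bigcup_{i<n}\overline{U_i}$, so that any $U_n\subseteq W$ will automatically be disjoint from $U_0,\dots,U_{n-1}$.

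At step $n$ I therefore need only shrink the open neighbourhood $W$ of $x_n$ to an open set $U_n$ with $x_m\notin\overline{U_n}$ for every $m>n$. Here the convergence $x_n\to x$ enters decisively. Using the Hausdorff property of $X$, choose disjoint open sets $A\ni x_n$ and $B\ni x$; since $B$ is open and disjoint from $A$, the inclusion $A\subseteq X\setminus B$ gives $\overline A\cap B=\emptyset$, so every point of $B$ lies outside $\overline A$. By convergence there exists $N>n$ with $x_m\in B$ for all $m\geq N$, hence $x_m\notin\overline A$ for all such $m$. For each of the finitely many indices $m$ with $n<m<N$, a further Hausdorff separation yields an open set $V_{n,m}\ni x_n$ whose closure misses $x_m$. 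Then
$$U_n\defeq W\cap A\cap\bigcap_{n<m<N}V_{n,m}$$
is an open neighbourhood of $x_n$ contained in $W$ and satisfying $x_m\notin\overline{U_n}$ for every $m>n$, which completes the inductive step and yields the required pairwise disjoint family.

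The main obstacle is that a Hausdorff space need not be regular, so one cannot a priori separate $x_n$ from the infinite tail $\{x_m:m>n\}$ by an open set whose closure is disjoint from that tail. The elementary observation that disjoint open sets $A,B$ satisfy $\overline A\cap B=\emptyset$, combined with the convergence $x_m\to x$, dissolves this difficulty by reducing the infinitely many separation conditions to one separation of $x_n$ from $x$ plus finitely many separations from the initial block $x_{n+1},\dots,x_{N-1}$.
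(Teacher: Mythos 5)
Your proof is correct. It differs from the paper's argument in the mechanism used for the key separation step. The paper separates $x_n$ from the \emph{compact} set $K_{n+1}\defeq\{x\}\cup\{x_k:k\ge n+1\}$ by disjoint open sets (using the standard point--compact separation in Hausdorff spaces) and keeps the $U_n$ disjoint via a nested chain $U_n\subseteq W_{n-1}$, $K_{n+1}\subseteq W_n\subseteq W_{n-1}$ with $U_n\cap W_n=\emptyset$. You instead avoid any appeal to compactness: you record the stronger inductive invariant that every later point $x_m$ avoids $\overline{U_i}$ for $i<n$, and at each step you reduce the infinitely many required separations to a single point--point separation of $x_n$ from the limit $x$ (the tail beyond some $N$ being absorbed by convergence and the elementary fact that disjoint open sets $A,B$ satisfy $\overline A\cap B=\emptyset$) plus finitely many point--point separations from $x_{n+1},\dots,x_{N-1}$. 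Both arguments are inductive and of comparable length; the paper's is slightly slicker because point--compact separation handles the whole tail in one stroke, while yours is more elementary in that it only ever invokes the Hausdorff axiom for pairs of points and makes explicit where convergence is used. Every step of your argument checks out, including the fact that $W=X\setminus\bigcup_{i<n}\overline{U_i}$ is an open neighbourhood of $x_n$ (this is exactly your invariant (b) applied with $m=n$) and that the separations of $x_n$ from $x$ and from $x_{n+1},\dots,x_{N-1}$ are legitimate because $x\notin\{x_k:k\in\w\}$ and the $x_k$ are pairwise distinct.
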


\begin{proof} The convergence of the sequence $(x_n)_{n\in\w}$ to $x$ implies that for every $n\in\w$ the subset $K_n\defeq\{x\}\cup\{x_k:k\ge n\}$ of $X$ is compact. By the compactness of the set $K_1\not\ni x_0$ and the Hausdorff property of $X$, there exist disjoint open sets $U_0,W_0$ in $X$ such that $x_0\in U_0$ and $K_1\subseteq W_0$. Inductively for every $n\in\IN$ we can choose two disjoint open sets $U_n,W_n$ in $X$ such that $x_n\in U_n\subseteq W_{n-1}$ and $K_{n+1}\subseteq W_n\subseteq W_{n-1}$. Then $(U_n)_{n\in\w}$ is the required sequence of pairwise disjoint sets with $x_n\in U_n$ for all $n\in\w$.\end{proof}

The proof of Theorem~\ref{t:main} exploits a non-trivial result of Kunen \cite{Kunen} on Rudin--Keisler incomparable ultrafilters. We recall \cite{vM} that two ultrafilters $\mathcal U,\mathcal V$ on a set $X$ are called {\em Rudin--Keisler incomparable} if $\beta f(\mathcal U)\ne \mathcal V$ and $\beta f(\mathcal V)\ne \mathcal U$ for every function $f:X\to X$. Here $\beta f$ is the {\em Stone-\v Cech extension} of $f$ assigning to each ultrafilter $\mathcal U$ on $X$ the ultrafilter $\beta f(\mathcal U)\defeq\{S\subseteq X:f^{-1}[S]\in \mathcal U\}$. 
The following important result was proved by Kunen in \cite{Kunen}.

\begin{lemma}\label{l:Kunen} There are continuum many pairwise Rudin--Keisler incomparable ultrafilters on $\IN$.
\end{lemma}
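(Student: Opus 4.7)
My plan is a transfinite diagonal construction of length $\mathfrak c$ based on Hausdorff's classical independent family theorem. By that theorem, there exists a family $\{I_\xi : \xi < \mathfrak c\}$ of subsets of $\IN$ such that every nontrivial finite Boolean combination $\bigcap_{\xi\in F_0}I_\xi\cap\bigcap_{\xi\in F_1}(\IN\setminus I_\xi)$, with $F_0,F_1\subseteq\mathfrak c$ finite and disjoint, is infinite. Since $|\IN^\IN|=\mathfrak c$, I enumerate all triples $(\alpha,\beta,f)\in\mathfrak c\times\mathfrak c\times\IN^\IN$ with $\alpha\ne\beta$ as $\{(\alpha_\gamma,\beta_\gamma,f_\gamma):\gamma<\mathfrak c\}$.

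For each $\alpha<\mathfrak c$ I would construct by transfinite recursion an increasing chain of filters $(\mathcal F^\gamma_\alpha)_{\gamma<\mathfrak c}$ on $\IN$, each generated by at most $|\gamma|+\aleph_0$ Boolean combinations of countably many of the $I_\xi$'s whose indices lie in a countable \emph{used} set $A^\gamma_\alpha\subseteq\mathfrak c$. Start with $\mathcal F^0_\alpha$ the filter generated by $I_\alpha$. At stage $\gamma+1$, treating $(\alpha,\beta,f)=(\alpha_\gamma,\beta_\gamma,f_\gamma)$, pick a fresh index $\xi\in\mathfrak c\setminus(A^\gamma_\alpha\cup A^\gamma_\beta)$ and then \emph{either} place $I_\xi$ in $\mathcal F^\gamma_\beta$ together with $\IN\setminus f^{-1}[I_\xi]$ in $\mathcal F^\gamma_\alpha$, \emph{or} place $\IN\setminus I_\xi$ in $\mathcal F^\gamma_\beta$ together with $f^{-1}[I_\xi]$ in $\mathcal F^\gamma_\alpha$, whichever choice preserves properness of both filters. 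At limits take unions; finally let $\mathcal U_\alpha$ be any ultrafilter extension of $\bigcup_{\gamma<\mathfrak c}\mathcal F^\gamma_\alpha$. In either case, the stage treating $(\alpha,\beta,f)$ produces a set $S$ with $S\in\mathcal U_\beta$ and $f^{-1}[S]\notin\mathcal U_\alpha$, preventing $\beta f(\mathcal U_\alpha)=\mathcal U_\beta$. Since every triple $(\alpha,\beta,f)$ is treated at some stage, the family $\{\mathcal U_\alpha:\alpha<\mathfrak c\}$ is pairwise Rudin--Keisler incomparable.

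The main obstacle I anticipate is checking that at each stage at least one of the two proposed choices keeps both filters proper. For the $\beta$-side, adding $I_\xi$ or its complement is automatic from independence: since $\xi\notin A^\gamma_\beta$, every member of $\mathcal F^\gamma_\beta$ is an infinite Boolean combination of $I_\eta$'s with $\eta\ne\xi$ and therefore meets both $I_\xi$ and $\IN\setminus I_\xi$. For the $\alpha$-side, where $f^{-1}[I_\xi]$ is an arbitrary set not controlled by the independent family, a clean filter-theoretic dichotomy does the job: if both $\mathcal F^\gamma_\alpha\cup\{\IN\setminus f^{-1}[I_\xi]\}$ and $\mathcal F^\gamma_\alpha\cup\{f^{-1}[I_\xi]\}$ failed to be proper, there would exist $B,B'\in\mathcal F^\gamma_\alpha$ with $B\subseteq f^{-1}[I_\xi]$ and $B'\subseteq f^{-1}[\IN\setminus I_\xi]$, giving $B\cap B'=\emptyset$ and contradicting that $\mathcal F^\gamma_\alpha$ is a proper filter. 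The remaining bookkeeping is routine: each of the $\mathfrak c$ stages consumes only countably many indices, so the reservoir of fresh $I_\xi$'s is never exhausted.
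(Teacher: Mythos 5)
The paper does not prove this lemma at all: it is imported as a black box from Kunen \cite{Kunen}, so the only question is whether your direct construction is sound. It is not. The gap is exactly at the step you dismiss as ``automatic from independence,'' namely the properness of the $\beta$-side extension. Your filters are \emph{not} generated by Boolean combinations of the independent family: every index $\beta$ occurs as the \emph{first} coordinate of cofinally many triples, and at each such stage $\mathcal F_\beta$ absorbs a set of the form $f^{-1}[I_\eta]$ or $\IN\setminus f^{-1}[I_\eta]$ --- an arbitrary subset of $\IN$ over which the independence of $\{I_\xi\}$ gives no control whatsoever. Hence at a later stage where $\beta$ is the second coordinate, a typical member of $\mathcal F^\gamma_\beta$ is a finite intersection of Boolean combinations with such preimage sets, and independence does not guarantee that this intersection meets both $I_\xi$ and $\IN\setminus I_\xi$ for a fresh $\xi$. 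Independent families need not split arbitrary infinite sets: if $\{J_\xi\}_{\xi<\mathfrak c}$ is independent on $\IN\setminus A_0$ for some infinite coinfinite $A_0$ and one sets $I_\xi\defeq J_\xi\cup A_0$, then $\{I_\xi\}_{\xi<\mathfrak c}$ is still independent, yet $\IN\setminus I_\xi$ is disjoint from $A_0$ for \emph{every} $\xi$; so once a subset of $A_0$ enters $\mathcal F_\beta$ as a preimage set, the option ``put $\IN\setminus I_\xi$ into $\mathcal F_\beta$'' is unavailable for every fresh $\xi$. Since your dichotomy on the $\alpha$-side may \emph{force} precisely that option (it decides which of $f^{-1}[I_\xi]$, $\IN\setminus f^{-1}[I_\xi]$ is compatible with $\mathcal F_\alpha$, and the $\beta$-side set is then determined), the recursion can get stuck, and nothing in your argument rules this out. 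This is not a cosmetic defect: taming the interaction between the independent family and the arbitrary sets $f^{-1}[S]$ is the actual content of Kunen's theorem, and his proof introduces extra machinery (independent families of functions with a reaping-type property preserved along the recursion) precisely to keep every filter inside a class where both one-step extensions remain available.

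Two smaller points. First, the ``used'' sets $A^\gamma_\alpha$ have size $|\gamma|+\aleph_0$, not $\aleph_0$, once $\gamma\ge\omega_1$, since each $\alpha$ is treated at cofinally many stages; this does not hurt the reservoir count (as $|\gamma|+\aleph_0<\mathfrak c$ for all $\gamma<\mathfrak c$), but your description of the generators is inaccurate for the same reason as above. Second, the ultrafilters used in the paper must be \emph{free}, so you should seed each $\mathcal F^0_\alpha$ with the Fr\'echet filter and run the properness dichotomy with ``infinite intersection'' in place of ``nonempty intersection''; your $B\cap B'$ argument survives that change verbatim.
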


\section{Proof of Theorem~\ref{t:main}}

Fix any Brown second-countable Hausdorff space $(X,\Tau)$. Let $\{U_n\}_{n\in\w}\subseteq\Tau\setminus\{\emptyset\}$ be a countable base of the topology $\Tau$. Being connected and Hausdorff, the Brown space $X$ contains no isolated points.

For every $n\in\IN$ choose a point $\lambda_n\in \overline U_k\cap\overline U_m\setminus\{\lambda_i:i<n\}$ where $n=2^k(2m+1)$. The point $\lambda_n$ exists since $X$ is Brown and hence the intersection $\overline U_k\cap\overline U_m$ is infinite.

For every $x\in X$ and $n\in\IN$, let $O_n(x)\defeq\bigcap\{U_k:k\le n,\;x\in U_k\}$. It is easy to see that $\{O_n(x)\}_{n\in\IN}$ is a neighborhood base of the topology $\Tau$ at $x$.

For every $x\in X$, choose a function $s_x:\IN\to X\setminus\{x\}$ such that the following conditions are satisfied:
\begin{itemize}
\item[(i)] $s_x(n)\in O_n(x)\setminus(\{x\}\cup\{s_x(i):1\le i<n\})$ for every $n\in\IN$;
\item[(ii)] if $x=\lambda_{2^k(2m+1)}$ for some $k,m\in\w$, then $s_x[3\IN+1]\subseteq U_k$ and $s_x[3\IN-1]\subseteq U_m$.
\end{itemize}

By Lemma~\ref{l:H}, for every $x\in X$ there exists a function $S_x:\IN\to \Tau$ such that
\begin{itemize}
\item[(iii)] $s_x(n)\in S_x(n)\subseteq O_n(x)\setminus\{x\}$ for every $n\in\IN$;
\item[(iv)] $S_x(n)\cap S_x(m)=\emptyset$ for every distinct numbers $n,m\in\IN$;
\item[(v)] if $x=\lambda_{2^k(2m+1)}$ for some $k,m\in\w$, then $\bigcup\limits_{n\in\IN}S_x(3n+1)\subseteq U_k$ and $\bigcup\limits_{n\in\IN}S_x(3n-1)\subseteq U_m$.
\end{itemize}

For any $x\in X$ and a subset $F\subseteq \IN$ let $S_x[F]\defeq\bigcup_{n\in F}S_x(n)$. 

Let $r_x:X\to\w$ be the unique function such that $$r_x^{-1}(0)=X\setminus S_x[\IN]\quad\mbox{and}\quad r_x^{-1}(n)=S_x[\{3n-1,3n,3n+1\}]$$ for every $n\in\IN$.

By \cite[1.5.1]{Eng}, the second-countable Hausdorff space $X$ has cardinality $|X|\le\mathfrak c$. By Lemma~\ref{l:Kunen}, there exists a family of pairwise Rudin--Keisler incomparable free ultrafilters $(\F_x)_{x\in X}$ on $\IN$.

Let $\Tau'$ be the topology on $X$ consisting of the sets $W\subseteq X$ such that for every $x\in W$ there exists a set $F\in \F_x$ such that $S_x[3F\pm1]\subseteq W$.

\begin{claim}\label{cl:base} For every $y\in X$, the family
$$\mathcal B_y\defeq\big\{\{y\}\cup S_y[3E\pm 1]:E\in\F_y\big\}$$is a neighborhood base of the topology $\Tau'$ at $y$.
\end{claim}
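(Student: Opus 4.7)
The plan is to establish the two halves of the neighborhood-base condition separately, with the nontrivial part being that each member of $\mathcal B_y$ is $\Tau'$-open.

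\textbf{Easy half.} If $W\in\Tau'$ contains $y$, the defining condition of $\Tau'$ supplies some $E\in\F_y$ with $S_y[3E\pm 1]\subseteq W$. Since $y\in W$, the set $\{y\}\cup S_y[3E\pm 1]\in\mathcal B_y$ lies inside $W$.

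\textbf{Hard half.} Fix $E\in\F_y$ and put $B\defeq\{y\}\cup S_y[3E\pm 1]$. To show $B\in\Tau'$ I must produce, for every $x\in B$, some $F\in\F_x$ with $S_x[3F\pm 1]\subseteq B$. At $x=y$ one simply takes $F\defeq E$. At $x\in B\setminus\{y\}$, the point $x$ lies in some $S_y(n)$ with $n\in 3E\pm 1$, and this $S_y(n)$ is a $\Tau$-open subset of $B$. Since $\{O_m(x)\}_{m\in\IN}$ is a $\Tau$-neighborhood base at $x$, there exists $k\in\IN$ such that $O_m(x)\subseteq S_y(n)$ for all $m\ge k$; then condition (iii) gives $S_x(m)\subseteq O_m(x)\subseteq S_y(n)\subseteq B$ for every $m\ge k$. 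The cofinite set $F\defeq\{m\in\IN:3m-1\ge k\}$ therefore belongs to the free ultrafilter $\F_x$ and witnesses $S_x[3F\pm 1]\subseteq B$.

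The main obstacle is the case $x\in B\setminus\{y\}$: the definition of $\Tau'$-openness at $x$ involves the ultrafilter $\F_x$, which is unrelated to the ultrafilter $\F_y$ controlling the shape of $B$. The resolution is that $B$ already contains a full $\Tau$-open neighborhood $S_y(n)$ of $x$, so eventual $\Tau$-smallness of the sets $S_x(m)$ reduces the required $F$ to a cofinite subset of $\IN$; since every free ultrafilter contains all cofinite sets, no further property of $\F_x$ is needed.
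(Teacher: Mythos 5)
Your proof is correct and follows essentially the same route as the paper's: the case $x=y$ is handled by taking $F=E$, and for $x\in S_y(n)$ one uses that $S_y(n)$ is $\Tau$-open, picks $m$ with $O_m(x)\subseteq S_y(n)$, and invokes condition (iii) together with the freeness of $\F_x$ to get a cofinite witness $F$. The paper's argument is identical in substance, merely stating the ``easy half'' as an immediate consequence of the definition of $\Tau'$.
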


\begin{proof}Fix any $y\in X$ and $E \in \F_y$. To show that $W\defeq \{y\}\cup S_y[3E\pm 1]$ is $\Tau'$-open  we need to check that for every $x\in W$ there exists a set $F\in \F_x$ such that $S_x[3F\pm1]\subseteq W$. 

If $x= y$ let $F\defeq E$ and observe that $S_x[3E\pm 1]\subseteq \{y\}\cup S_y[3E\pm1]$.

If $x\neq y$ then $x\in S_y(n)$ for some  $n\in 3 E\pm1$. Since $S_y(n)\in \Tau$, there exists $m\in \IN$ such that $O_m(x)\subseteq S_y(n)$. Since $\F_x$ is a  free filter, the set  $F\defeq\{k\in \IN:3k-1\ge m\}$ belongs to $\F_x$. The condition (iii) ensures that $S_x[3F\pm1]\subseteq O_m(x)\subseteq S_y(n)\subseteq W$ and hence $W\in\Tau'$. 
 
 Therefore, the family $\mathcal B_y$ consists of $\Tau'$-open sets. The definition of the topology $\Tau'$ ensures that $\mathcal B_y$ is a neighborhood base of the topology $\Tau'$ at $y$.
\end{proof}

In the following five claims we shall prove that the topology $\Tau'$ has the required properties.

\begin{claim}  $\Tau'$ is an open refinement of the topology $\Tau$ and hence $\Tau\subseteq\Tau'$.
\end{claim}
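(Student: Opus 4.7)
The plan is to unpack the definition of open refinement directly. Fix arbitrary $V\in\Tau$, $V'\in\Tau'$, and $x\in V\cap V'$; I need to produce an open set $U\in\Tau$ with $U\subseteq V\cap V'$ and $\{x\}\cup U\in\Tau'$. The strategy is to exploit Claim~\ref{cl:base}, which asserts that the family $\mathcal B_x=\big\{\{x\}\cup S_x[3E\pm 1]:E\in\F_x\big\}$ is a $\Tau'$-neighborhood base at $x$. Hence it suffices to find a single $E\in\F_x$ such that $U\defeq S_x[3E\pm 1]$ is contained in $V\cap V'$; condition~(iii) guarantees each $S_x(n)\in\Tau$, so this $U$ is automatically $\Tau$-open, and $\{x\}\cup U\in\mathcal B_x\subseteq\Tau'$.

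To construct such an $E$, I first use $V'\in\Tau'$ and the definition of $\Tau'$ to obtain some $F\in\F_x$ with $S_x[3F\pm 1]\subseteq V'$. Next, since $\{O_n(x)\}_{n\in\IN}$ is a $\Tau$-neighborhood base at $x$ and $V\in\Tau$, I choose $m\in\IN$ with $O_m(x)\subseteq V$. Because $\F_x$ is a free ultrafilter, the cofinite set $\{k\in\IN:3k-1\ge m\}$ lies in $\F_x$, so $E\defeq F\cap\{k\in\IN:3k-1\ge m\}\in\F_x$. For every $n\in 3E\pm 1$ we have $n\ge 3k-1\ge m$ for the corresponding $k\in E$, and together with the monotonicity of $O_n(x)$ in $n$ and condition~(iii) this gives $S_x(n)\subseteq O_n(x)\subseteq O_m(x)\subseteq V$; combining with $E\subseteq F$ then yields $U=S_x[3E\pm 1]\subseteq V\cap V'$, as required.

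The ``hence'' clause $\Tau\subseteq\Tau'$ is automatic from the definition of open refinement: note first that $X\in\Tau'$, since for every $x\in X$ one can take $F=\IN\in\F_x$ in the definition of $\Tau'$. Then, given any $V\in\Tau$, applying the refinement property with $V'=X$ at each point $x\in V$ produces a $\Tau'$-open neighborhood $\{x\}\cup U\subseteq V$ of $x$, exhibiting $V$ as a union of $\Tau'$-open sets and hence placing $V$ in $\Tau'$. I do not expect a genuine obstacle here; the only delicate point is the index bookkeeping around $3k\pm 1$, and this is handled cleanly by intersecting $F$ with the cofinite tail $\{k:3k-1\ge m\}$, which belongs to every free ultrafilter on $\IN$.
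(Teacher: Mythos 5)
Your argument is correct and follows essentially the same route as the paper: obtain $F\in\F_x$ from the definition of $\Tau'$, shrink it to $\{k\in F:3k-1\ge m\}$ so that $S_x[3F\pm1]$ lands inside $O_m(x)\cap V'$, and invoke Claim~\ref{cl:base} to see that $\{x\}\cup S_x[3F\pm1]\in\Tau'$. Your explicit derivation of $\Tau\subseteq\Tau'$ from the refinement property is a small addition the paper leaves implicit, but it is the same reasoning the paper relies on when it states that the definition of open refinement implies $\Tau\subseteq\Tau'$.
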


\begin{proof} Fix any open sets $W\in\Tau$, $W'\in\Tau'$, and a point $x\in W\cap W'$. Find $n\in\IN$ such that $O_n(x)\subseteq W$. By the definition of the topology $\Tau'$, there exists a set $F\in\F_x$ such that $S_x[3F\pm 1]\subseteq W'$. Replacing the set $F$ by its subset $\{k\in F:3k-1\ge n\}\in\F_x$, we can assume that $S_x[3F\pm1]\subseteq O_n(x)\cap W'\subseteq W\cap W'$. It is clear that $S_x[3F\pm1]\in\Tau$. By Claim~\ref{cl:base}, $\{x\}\cup S_x[3F\pm1]\in\Tau'$.
\end{proof}

\begin{claim}\label{cl:Brown} $X'\defeq(X,\Tau')$ is a Brown Hausdorff space.
\end{claim}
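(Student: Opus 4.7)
The plan is to dispatch Hausdorffness first and spend most of the effort on the Brown property.

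\emph{Hausdorff:} The previous claim has already established $\Tau\subseteq\Tau'$, so every pair of disjoint $\Tau$-open separating neighborhoods of two distinct points of $X$ also serves as a pair of disjoint $\Tau'$-open neighborhoods. Thus Hausdorffness of $(X,\Tau)$ is inherited by $X'=(X,\Tau')$.

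\emph{Brown:} Fix nonempty $W_1,W_2\in\Tau'$. First I would show that every nonempty $\Tau'$-open set contains a nonempty $\Tau$-open subset: picking $x\in W_i$ and $F\in\F_x$ with $S_x[3F\pm1]\subseteq W_i$, the set $S_x[3F\pm1]$ is a union of the $\Tau$-open sets $S_x(n)$ (each nonempty, containing $s_x(n)$), so it is nonempty and lies in $\Tau$. Since $\{U_n\}_{n\in\w}$ is a base of $\Tau$, one can then shrink to a nonempty basic set $U_{k}\subseteq W_1$ and $U_{m}\subseteq W_2$.

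Next, since $X$ has no isolated points (a Brown $T_1$-space with more than one point cannot have any, as $\{x\}$ would contradict the Brown property for $U=\{x\}$, $V=X\setminus\{x\}$) and $X$ is second-countable Hausdorff, the set $U_{k}$ contains infinitely many pairwise disjoint nonempty basic sets $U_{k_1},U_{k_2},\dots$, and similarly $U_{m}$ contains infinitely many $U_{m_1},U_{m_2},\dots$. For every pair $(k_i,m_j)$ put $n_{ij}\defeq2^{k_i}(2m_j+1)$; by unique factorization $n=2^{k}(2m+1)$ these indices are pairwise distinct, hence so are the points $\lambda_{n_{ij}}$ (which were chosen distinct in the construction).

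Finally I would verify that every such $y\defeq\lambda_{n_{ij}}$ lies in $\overline{W_1}^{\Tau'}\cap\overline{W_2}^{\Tau'}$. Indeed, condition (v) gives $\bigcup_{n\in\IN}S_y(3n+1)\subseteq U_{k_i}\subseteq W_1$ and $\bigcup_{n\in\IN}S_y(3n-1)\subseteq U_{m_j}\subseteq W_2$, while Claim~\ref{cl:base} says every $\Tau'$-neighborhood of $y$ contains some $\{y\}\cup S_y[3E\pm1]$ with $E\in\F_y$; freeness of $\F_y$ forces $E$ to be infinite, so both $S_y[3E+1]$ and $S_y[3E-1]$ are nonempty, intersecting $W_1$ and $W_2$ respectively. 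This provides infinitely many points in $\overline{W_1}^{\Tau'}\cap\overline{W_2}^{\Tau'}$, as required. The main subtlety will be producing \emph{infinitely many} common closure points (rather than the single $\lambda_{2^k(2m+1)}$ one gets for the pair $(U_k,U_m)$ itself); the device of varying the pair $(k_i,m_j)$ inside $(U_k,U_m)$ and invoking unique $2$-adic factorization is what handles it.
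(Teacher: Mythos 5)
Your proof is correct, but it follows a genuinely different route from the paper's. The paper argues by contradiction: assuming $\cl_{X'}(U')\cap\cl_{X'}(V')$ is some finite set $I$, it picks points $u\in U'\setminus I$, $v\in V'\setminus I$, uses the Hausdorffness of $(X,\Tau)$ to find $\Tau$-open sets $O_u,O_v$ whose $\Tau$-closures miss $I$, invokes the open-refinement property to shrink $U'\cap O_u$ and $V'\cap O_v$ to nonempty $\Tau$-open sets containing basic sets $U_k,U_m$, and then shows that the single point $\lambda_{2^k(2m+1)}$ lies both in $I$ and in $X\setminus I$. You instead give a direct argument: you extract nonempty $\Tau$-open subsets of $W_1,W_2$ straight from the definition of $\Tau'$ (via $S_x[3F\pm1]$, which is more elementary than appealing to the open-refinement claim), and you manufacture \emph{infinitely many} common $\Tau'$-closure points by choosing infinitely many pairwise disjoint basic sets inside each of $U_k$ and $U_m$ and using the injectivity of $(k,m)\mapsto 2^k(2m+1)$ together with the pairwise distinctness of the $\lambda_n$. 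Both proofs rest on the same key device --- condition (v) forces $\lambda_{2^k(2m+1)}$ into the $\Tau'$-closure of any $\Tau'$-open set containing $U_k$ (resp.\ $U_m$) --- but your version trades the contradiction-plus-separation argument for an explicit exhibition of infinitely many witnesses, which makes the infinitude of the intersection transparent and avoids any further use of the Hausdorff separation of a finite set. All the auxiliary facts you use (absence of isolated points in a Brown Hausdorff space, existence of infinitely many pairwise disjoint nonempty basic subsets of $U_k$, nonemptiness of each $S_y(3n\pm1)$ because it contains $s_y(3n\pm1)$, and $E\ne\emptyset$ for $E\in\F_y$) check out.
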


\begin{proof} Since $\Tau\subseteq\Tau'$, the topology $\Tau'$ is Hausdorff. Assuming that the space $X'$ is not Brown, we can find two nonempty open sets $U',V'\in\Tau'$ such that the intersection $I\defeq \cl_{X'}(U')\cap\cl_{X'}(V')$ is finite. Claim~\ref{cl:base} implies that the Hausdorff space $X'$ has no isolated points. Then there exist points $u\in U'\setminus I$ and $v\in V'\setminus I$.
Since the space $(X,\Tau)$ is Hausdorff, there exist open sets $O_u,O_v\in\Tau$ such that $u\in O_u$, $v\in O_v$, and $\cl_X(O_u)\cap I=\emptyset=I\cap \cl_X(O_v)$. Since $\Tau'$ is an open refinement of the topology $\Tau$, there exist open sets $U,V\in\Tau$ such that $U\subseteq U'\cap O_u$, $V\subseteq V'\cap O_v$ and the sets $\{u\}\cup U,\{v\}\cup V$ belong to the topology $\Tau'$. The definition of the topology $\Tau'$ implies that the topological space $(X,\Tau')$ has no isolated points and hence the open sets $U,V$ are not empty. Find $k,m\in\w$ such that $U_k\subseteq U$ and $U_m\subseteq V$. Then for the point $x\defeq\lambda_{2^k(2m+1)}$ we have $x\in \overline U_k\cap\overline U_m.$
We claim that $x\in \cl_{X'}(U')\cap\cl_{X'}(V')$.

Fix any neighborhood $W\in\Tau'$ of $x$. By the definition of the topology $\Tau'$, there exist a set $F\in\F_x$ such that $S_x[3F\pm 1]\subseteq W$. By the condition (v), $S_x[3F+1]\subseteq U_k\subseteq U\subseteq U'$ and $S_x[3F-1]\subseteq U_m\subseteq V\subseteq V'$, which implies that $W\cap U'\ne\emptyset\ne W\cap V'$.

Therefore, $x\in\cl_{X'}(U')\cap\cl_{X'}(V')=I$. On the other hand,
$$x=\lambda_{2^k(2m+1)}\in\cl_{X}(U_k)\cap\cl_{X}(U_m)\subseteq\cl_{X}(O_u)\cap\cl_{X}(O_v)\subseteq X\setminus I.$$
This contradiction completes the proof of the Brown property of the topology $\Tau'$.
\end{proof}

\begin{claim}\label{cl:anticompact} The space $X'$ is anticompact.
\end{claim}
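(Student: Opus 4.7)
The plan is to argue by contradiction. Assuming $K\subseteq X'$ is infinite and compact, I will produce two distinct points $y\neq z$ in $K$ and a function $h:\IN\to\IN$ with $\beta h(\F_y)=\F_z$, contradicting the Rudin--Keisler incomparability of $\{\F_x\}_{x\in X}$ guaranteed by Lemma~\ref{l:Kunen}. Since $X'$ is Hausdorff (Claim~\ref{cl:Brown}) and $K$ is infinite, $K$ has a non-isolated point $y$. By Claim~\ref{cl:base} and the ultrafilter property of $\F_y$, one of the sets $A_\pm:=\{n\in\IN:K\cap S_y(3n\pm 1)\neq\emptyset\}$ must belong to $\F_y$: otherwise $\IN\setminus(A_+\cup A_-)\in\F_y$ would produce a neighborhood of $y$ meeting $K$ only in $\{y\}$. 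I assume WLOG that $A_+\in\F_y$ and pick $x_n\in K\cap S_y(3n+1)$ for every $n\in A_+$.

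By compactness and the Hausdorff property of $K$, every free ultrafilter $\mathcal V$ on $\IN$ with $A_+\in\mathcal V$ has a well-defined $\mathcal V$-limit $z_{\mathcal V}:=\mathcal V\text{-}\lim_{n\in A_+}x_n\in K$. Since the sets $\{S_y(m)\}_{m\in\IN}$ are pairwise disjoint and $x_n\in S_y(3n+1)$, one has $\{n\in A_+:x_n\in\{y\}\cup S_y[3E\pm 1]\}=E\cap A_+$ for every $E\in\F_y$; consequently $z_{\mathcal V}=y$ forces $\mathcal V|_{A_+}=\F_y|_{A_+}$. To exploit this, split $A_+=S_1\sqcup S_2$ into two infinite pieces with $S_1\in\F_y$ and $S_2\notin\F_y$, pick a bijection $\sigma:A_+\to A_+$ swapping $S_1$ with $S_2$, and extend it by the identity on $\IN\setminus A_+$ to a bijection $\tilde\sigma:\IN\to\IN$. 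Setting $\mathcal V:=\beta\tilde\sigma(\F_y)$ yields a non-principal ultrafilter with $A_+\in\mathcal V$ but $\mathcal V|_{A_+}\neq\F_y|_{A_+}$ (because $S_1\in\F_y$ while $\tilde\sigma^{-1}(S_1)=S_2\notin\F_y$), so $z:=z_{\mathcal V}\in K\setminus\{y\}$.

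To extract the Rudin--Keisler contradiction I unwind the condition $z_{\mathcal V}=z$ using Claim~\ref{cl:base}: for every $E\in\F_z$, the set $\{n\in A_+:x_n\in S_z[3E\pm 1]\}$ equals $D\cap k_z^{-1}(E)$ and lies in $\mathcal V$, where $D:=\{n\in A_+:x_n\in S_z[3\IN\pm 1]\}$ and $k_z(n)$ is the unique $k\in\IN$ with $x_n\in S_z(3k-1)\cup S_z(3k+1)$ for $n\in D$. Extending $k_z$ by an arbitrary value to all of $\IN$, this gives $\F_z\subseteq\beta k_z(\mathcal V)$, and hence $\beta k_z(\mathcal V)=\F_z$ by maximality of ultrafilters. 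Combined with $\mathcal V=\beta\tilde\sigma(\F_y)$ this yields $\F_z=\beta(k_z\circ\tilde\sigma)(\F_y)$, the desired contradiction.

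The main obstacle will be the identification $z_{\mathcal V}=y\Leftrightarrow\mathcal V|_{A_+}=\F_y|_{A_+}$, which relies crucially on the disjointness and $\Tau$-openness of the sets $S_y(m)$ and on the particular choice $x_n\in S_y(3n+1)$. Once this is in place, the bijection $\tilde\sigma$ pushes $\F_y$ off the fiber over $y$ while keeping $\mathcal V$ Rudin--Keisler-equivalent to $\F_y$, and the compatibility of the local structure at $z$ with the ultrafilter push-forward makes $k_z\circ\tilde\sigma$ the function realizing $\F_z$ as a Rudin--Keisler image of $\F_y$.
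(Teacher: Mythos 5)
Your argument is correct, but it follows a genuinely different route from the paper's. The paper first transfers the problem to the original topology: since $\Tau\subseteq\Tau'$, an infinite compact set $K\subseteq X'$ stays compact in the second-countable Hausdorff space $(X,\Tau)$, hence contains a nontrivial convergent sequence, and the compact-to-Hausdorff trick shows that this sequence also converges in $X'$; the contradiction is then obtained using only the fact that $\F_x$ is a free ultrafilter (either some fibre of $r_x$ on the sequence is infinite and one picks $F\in\F_x$ avoiding it, or the image of the sequence under $r_x$ is infinite and splits into two infinite halves, one of which is not in $\F_x$). You instead work entirely inside $X'$: you take ultrafilter limits of a selection $(x_n)_{n\in A_+}$, $x_n\in K\cap S_y(3n+1)$, in the compact Hausdorff subspace $K$, observe that the only ultrafilter whose limit is $y$ is $\F_y$ itself, twist $\F_y$ by a bijection $\tilde\sigma$ to force a second limit point $z\ne y$, and then unwind the convergence to $z$ to exhibit $\F_z=\beta(k_z\circ\tilde\sigma)(\F_y)$, contradicting Rudin--Keisler incomparability. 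All the individual steps check out (in particular the key identity $\{n\in A_+:x_n\in\{y\}\cup S_y[3E\pm1]\}=E\cap A_+$ follows from the pairwise disjointness of the sets $S_y(m)$ and the fact that $3n+1\notin 3\IN-1$). Your approach dispenses with the second-countability of $(X,\Tau)$ and with the extraction of a convergent sequence, but it spends the Rudin--Keisler incomparability of the family $(\F_x)_{x\in X}$, which the paper reserves for the strong-rigidity argument (Claim~\ref{cl:self}) and does not need for anticompactness. Both proofs are valid; the paper's is more economical in the hypotheses it uses, yours is more intrinsic to the topology $\Tau'$.
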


\begin{proof} In the opposite case, $X'$ contains an infinite compact subset $K$. Since $\Tau\subseteq\Tau'$, the  set $K$ remains compact in the second-countable Hausdorff space $X$ and being infinite, contains a sequence $\{x_n\}_{n\in\w}\subseteq K$ of pairwise distinct points converging to some point $x\in K$. The compactness of $K\subseteq X'$ and the Hausdorff property of the space $X$ ensure that the identity inclsuion $K\to X$ is a topological embedding, so the sequence $(x_n)_{n\in\w}$ converges to $x$ in the space $X'$.

Now consider the function $r_x:X\to\w$ such that $r_x^{-1}(0)= X\setminus S_x[\IN\setminus\{1\}]$ and $r_x^{-1}(m)=S_x[\{3m-1,3m,3m+1\}]$ for all $m\in\IN$. If for some $m\in\w$ the set $J_m\defeq\{n\in\w:r_x(x_n)=m\}$ is infinite, then choose any set $F\in\F_x$ with $m\notin F$.

If for every $m\in\w$ the set $J_m$ is finite, then the set $I\defeq\{r_x(x_n)\}_{n\in\w}$ is infinite and hence it can be written as the union $I=I_0\cup I_1$ of two disjoint infinite subsets. Since $\F_x$ is an ultrafilter, there exists $k\in\{0,1\}$ such that $I_k\notin \F_x$ and hence the set $F\defeq\IN\setminus I_k$ belongs to $\F_x$. 

In the both cases we can find a set $F\in\F_x$ such that the set $J\defeq\{n\in\w:r_x(x_n)\notin F\}$ is infinite. Then $U_x\defeq\{x\}\cup S_x[3F\pm1]$ is a $\Tau'$-open neighborhood of $x$ such that the set $\{n\in\w:x_n\notin U_x\}$ is infinite, which contradicts the convergence of the sequence $(x_n)_{n\in\w}$ to $x$.
\end{proof}

\begin{claim}\label{cl:self} Every continuous map $f:C\to X'$  from any vertex-connected closed subspace $C$ of $X'=(X,\Tau')$ is either constant or else is the identity embedding.
\end{claim}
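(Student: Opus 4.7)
The plan is to assume $f$ is non-constant and not the identity, fix $x_0\in C$ with $y\defeq f(x_0)\neq x_0$, and derive a contradiction from the Rudin--Keisler incomparability of $\F_{x_0}$ and $\F_y$ (which holds by construction since $x_0\neq y$) together with the vertex-connectedness of $C$. The governing idea is a dichotomy: either I can extract from $f$ a function $g':\IN\to\IN$ with $\beta g'(\F_{x_0})=\F_y$, contradicting Rudin--Keisler incomparability; or $f$ is identically $y$ on a $C$-neighborhood of $x_0$, and then vertex-connectedness forces $f$ to be constant.

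First I transport the continuity of $f$ at $x_0$ to the level of $\IN$. Because $C$ is connected with $|C|>1$ (the latter from non-constancy of $f$), an ultrafilter argument shows that
\[T^\ast\defeq\{n\in\IN:C\cap(S_{x_0}(3n-1)\cup S_{x_0}(3n+1))\neq\emptyset\}\in\F_{x_0},\]
as otherwise $\{x_0\}\cup S_{x_0}[3(\IN\setminus T^\ast)\pm 1]$ would meet $C$ only at $x_0$, rendering $\{x_0\}$ clopen in $C$. Fix a selector $\xi(n)\in C\cap(S_{x_0}(3n-1)\cup S_{x_0}(3n+1))$ for $n\in T^\ast$ and define $g:\IN\to\w$ by $g(n)\defeq r_y(f(\xi(n)))$ on $T^\ast$ and $g(n)\defeq 0$ otherwise. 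By Claim~\ref{cl:base} and the continuity of $f$, for each $E\in\F_y$ there exists $F_E\in\F_{x_0}$ with $f[C\cap S_{x_0}[3F_E\pm 1]]\subseteq\{y\}\cup S_y[3E\pm 1]$; since $S_y(3k-1)\cup S_y(3k+1)\subseteq r_y^{-1}(k)$ and $y\notin S_y[\IN]$, this yields $g(n)\in\{0\}\cup E$ for every $n\in T^\ast\cap F_E$.

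Now I dichotomize according to whether the selector $\xi$ can be chosen so that $\{n\in T^\ast:f(\xi(n))\neq y\}\in\F_{x_0}$. In the affirmative case, $g(n)\in E$ on an $\F_{x_0}$-large set for each $E\in\F_y$; replacing the value $0$ of $g$ by $1$ produces a function $g':\IN\to\IN$ with $\F_y\subseteq\beta g'(\F_{x_0})$, and since $\beta g'(\F_{x_0})$ is an ultrafilter on $\IN$ this forces $\beta g'(\F_{x_0})=\F_y$, contradicting Rudin--Keisler incomparability. In the negative case,
\[B\defeq\{n\in T^\ast:C\cap(S_{x_0}(3n-1)\cup S_{x_0}(3n+1))\subseteq f^{-1}(y)\}\]
lies in $\F_{x_0}$, and then $C\cap(\{x_0\}\cup S_{x_0}[3B\pm 1])\subseteq f^{-1}(y)$, so $x_0$ belongs to $V\defeq\mathrm{int}_C(f^{-1}(y))$.

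To close out, I apply the same dichotomy to each $c\in f^{-1}(y)\setminus\{y\}$, noting that $f(c)=y\neq c$ and that $\F_c$ and $\F_y$ are Rudin--Keisler incomparable: either a direct contradiction results, or $c\in V$. Hence $f^{-1}(y)\subseteq V\cup\{y\}$, so the disjoint open subsets $V$ and $W\defeq C\setminus f^{-1}(y)$ of $C$ satisfy $V\cup W\supseteq C\setminus\{y\}$. Both are nonempty ($x_0\in V$; $W\neq\emptyset$ because $f$ is non-constant), so $V$ and $W$ disconnect either $C$ itself or the subspace $C\setminus\{y\}$, contradicting the vertex-connectedness of $C$. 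The main obstacle I expect is precisely the branch where $f\circ\xi\equiv y$ along $\F_{x_0}$: it produces no Rudin--Keisler map by itself and must first be reformulated topologically as $x_0\in\mathrm{int}_C(f^{-1}(y))$ before the vertex-connectedness of $C$ can rule it out.
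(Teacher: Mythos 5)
Your proof is correct and follows essentially the same strategy as the paper's: Rudin--Keisler incomparability of $\F_x$ and $\F_y$ (applied to $r_y\circ f$ composed with a selector through the sets $S_x(3n\pm1)\cap C$) forces every point of $f^{-1}(y)\setminus\{y\}$ to be interior to $f^{-1}(y)$, after which the (vertex-)connectedness of $C$ finishes the argument. The differences are only in packaging: you exhibit the reducing map $g'$ with $\beta g'(\F_{x_0})=\F_y$ directly where the paper argues by contrapositive (choosing $F_x,F_y$ with $F_y\cap r_y[f[\varphi[F_x]]]=\emptyset$), and you phrase the endgame as a disconnection of $C$ or $C\setminus\{y\}$ where the paper shows $E=f^{-1}(y)\setminus\{y\}$ is clopen in $C\setminus\{y\}$.
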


\begin{proof} Assume that $f:C\to X'$ is not an identity embedding. Then there exists a point $c\in C$ such that $f(c)\ne c$. By the continuity of $f$, the set $$E\defeq\{x\in C\setminus\{f(c)\}:f(x)=f(c)\}$$ is closed in the space $C\setminus\{f(c)\}$. Since the space $C$ is vertex-connected, its subspace $C\setminus\{f(c)\}$ is connected.
Let us show that the set $E$ is open in $C\setminus \{f(c)\}$. Choose any point $x\in E$ and let $y\defeq f(x)=f(c)\ne x$.

By Claim~\ref{cl:base}, the set $O_y\defeq\{y\}\cup S_y[\IN]$ is an open neighborhood of $y$ in the space $X'$. Consider the function $r_y:X\to \w$  such that $$r_y^{-1}(0)=X\setminus S_y[\IN\setminus\{1\}]\quad\mbox{and}\quad r_y^{-1}(n)=S_y[\{3n-1,3n,3n+1\}]$$ for all $n\in\IN$.

 By the continuity of the function $f$ at $x$ and Claim~\ref{cl:base}, there exist $F\in \F_x$ such that $f[S_x[3F\pm 1]\cap C]\subseteq O_y$.

For $i\in\{-1,1\}$ consider the sets
$$
I_i\defeq\{n\in\IN:S_x(3n+i)\cap C\subseteq f^{-1}(y)\}\quad\mbox{and}\quad J_i\defeq\{n\in\IN:S_x(3n+i)\cap (C\setminus f^{-1}(y))\ne\emptyset\}.
$$
It is clear that $I_i\cup J_i=\IN$.

We claim that $J_i\notin \F_x$. Indeed, assuming that $J_i\in\F_x$ we can choose a function $\varphi:J_i\to C\setminus f^{-1}(y)$ such that $\varphi(n)\in S_x(3n+i)\cap  (C\setminus f^{-1}(y))$ for all $n\in J_i$. Then $$f\circ \varphi[J_i\cap F]\subseteq f[S_x[3F+i]\cap C]\setminus\{y\}\subseteq  O_y\setminus\{y\}=S_y[\IN]$$ and hence $r_y\circ f\circ \varphi[J_i\cap F]\subseteq\IN$. Since the ultrafilters $\F_x$ and $\F_y$ are Rudin--Keisler incomparable, there exist sets $F_x\in\F_x$ and $F_y\in\F_y$ such that  $F_y\cap r_y[f[ \varphi[F_x]]]=\emptyset$. Since $F,J_i\in\F_x$, we can additionally assume that $F_x\subseteq J_i\cap F$. Consider the neighborhood $O'_y\defeq \{y\}\cup S_y[3F_y\pm1]$ of $y$ in $X'$. By the continuity of $f$ at $x$ and Claim~\ref{cl:base}, there exists a set $F'_x\in\F_x$ such that $f[S_x[3F'_x\pm1]]\subseteq O'_y$ and $F'_x\subseteq F_x\subseteq J_i\cap F$. Choose any number $n\in F'_x$ and observe that $r_y\circ f\circ \varphi(n)\in r_y\circ f\circ \varphi[F_x]\subseteq \IN\setminus F_y$ and hence $f\circ\varphi(n)\notin \{y\}\cup S_y[3F_y\pm1]=O'_y$. On the other hand, $f\circ\varphi(n)\in f[S_x[3F'_x\pm1]]\subseteq O'_y$. This contradiction shows that $J_i\notin\F_x$ and hence $I_i=\IN\setminus J_i\in\F_x$.

Then for the set $I\defeq I_{-1}\cap I_1\in\F_x$ and the neighborhood $O_x\defeq(\{x\}\cup S_x[3I\pm1])\setminus\{y\}\in\Tau'$ of $x$ we have $O_x\cap C\subseteq f^{-1}(y)$ and hence $O_x\cap C\subseteq E$, which means that the closed set $E$ is open in $C\setminus\{y\}$. By the connectedness of $C\setminus\{y\}$ the clopen set $E\ni c$ coincides with $C\setminus\{y\}$. Therefore, $f[C\setminus\{y\}]=\{y\}$ and by the continuity of $f$ and density of $C\setminus\{y\}$ in $C$, we have $f[C]=\{y\}$, which means that the function $f$ is constant.
\end{proof}

\begin{claim} The space $X'=(X,\Tau')$ is strongly rigid.
\end{claim}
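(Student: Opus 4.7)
The plan is to deduce strong rigidity of $X'$ as an essentially immediate corollary of Claim~\ref{cl:self} combined with Lemma~\ref{l:vertex} and Claim~\ref{cl:Brown}. The strategy is to recognize that $X'$ itself qualifies as a vertex-connected closed subspace of $X'$, so Claim~\ref{cl:self} applied with $C\defeq X'$ already gives the whole conclusion.

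More concretely, I would first note that by Claim~\ref{cl:Brown} the space $X'=(X,\Tau')$ is a Brown Hausdorff space, hence in particular a Brown $T_1$-space. Lemma~\ref{l:vertex} then ensures that $X'$ is vertex-connected. Trivially $X'$ is closed in itself. Therefore Claim~\ref{cl:self} applies with $C=X'$ and tells us that every continuous self-map $f:X'\to X'$ is either constant or the identity embedding of $X'$ into $X'$, i.e.\ the identity map of $X'$. By the definition given in the introduction, this is precisely what it means for $X'$ to be strongly rigid.

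The main potential obstacle would have been dealing with non-identity, non-constant continuous self-maps directly, but all the real work was already done in Claim~\ref{cl:self} where the Rudin--Keisler incomparability of the ultrafilters $(\F_x)_{x\in X}$ was exploited. Since that claim was stated in the greater generality of vertex-connected closed subspaces $C$, and $X'$ is itself such a subspace by Lemma~\ref{l:vertex} and Claim~\ref{cl:Brown}, the proof of this final claim reduces to a one-line verification of the hypotheses of Claim~\ref{cl:self}. No additional argument is required; in particular, this also completes the proof of Theorem~\ref{t:main}, since the ``moreover'' clause of that theorem is exactly the content of Claim~\ref{cl:self}.
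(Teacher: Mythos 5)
Your proposal is correct and follows exactly the paper's own argument: Claim~\ref{cl:Brown} gives the Brown property, Lemma~\ref{l:vertex} gives vertex-connectedness, and Claim~\ref{cl:self} applied to $C=X'$ yields strong rigidity. Nothing further is needed.
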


\begin{proof} By Claim~\ref{cl:Brown}, the space $X'$ is Brown. By Lemma~\ref{l:vertex}, the Brown space $X'$ is vertex-connected and by Claim~\ref{cl:self}, every continuous map $f:X'\to X'$ is either constant or the identity, which means that the space $X'$ is strongly rigid.
\end{proof}

\section{Proof of Theorem~\ref{t:main2}}

We start like in the proof of Theorem~\ref{t:main}: fix any countable second-countable Brown Hausdorff space $X$ and let $\{U_n\}_{n\in\w}\subseteq\Tau_X\setminus\{\emptyset\}$ be a countable base of the topology $\Tau_X$ of $X$. For every $n\in\IN$ choose a point $\lambda_n\in \overline U_k\cap\overline U_m\setminus\{\lambda_i:i<n\}$ where $n=2^k(2m+1)$. For every $x\in X$ and $n\in\IN$, let $O_n(x)\defeq\bigcap\{U_k:k\le n,\;x\in U_k\}$. It is easy to see that $\{O_n(x)\}_{n\in\IN}$ is a neighborhood base of the topology $\Tau_X$ at $x$.

For every $x\in X$, choose a function $s_x:\IN\to X\setminus\{x\}$ such that the following conditions are satisfied:
\begin{itemize}
\item[(i)] $s_x(n)\in O_n(x)\setminus(\{x\}\cup\{s_x(i):1\le i<n\})$ for every $n\in\IN$;
\item[(ii)] if $x=\lambda_{2^k(2m+1)}$ for some $k,m\in\w$, then $s_x[3\IN+1]\subseteq U_k$ and $s_x[3\IN-1]\subseteq U_m$.
\end{itemize}

By Lemma~\ref{l:H} and the Hausdorff property of $X$, for every $x\in X$ there exists a function $S_x:\IN\to \Tau_X$ such that
\begin{itemize}
\item[(iii)] $s_x(n)\in S_x(n)\subseteq O_n(x)$ and $x\notin\cl_X(S_x(n))$ for every $n\in\IN$;
\item[(iv)] $S_x(n)\cap S_x(m)=\emptyset$ for every distinct numbers $n,m\in\IN$;
\item[(v)] if $x=\lambda_{2^k(2m+1)}$ for some $k,m\in\w$, then $\bigcup\limits_{n\in\IN}S_x(3n+1)\subseteq U_k$ and $\bigcup\limits_{n\in\IN}S_x(3n-1)\subseteq U_m$.
\end{itemize}

For any $x\in X$ and a subset $F\subseteq \IN$, let $S_x[F]\defeq\bigcup_{n\in F}S_x(n)$.

Fix any point $a\in X$ and take any set $Y$ such that $Y=X\cup\{b\}$ for some point $b\notin X$. 

Let $r:Y\to X$ be the unique function such that $r(x)=x$ for all $x\in X$ and $r(b)=a$. 

 By Lemma~\ref{l:Kunen}, there exists a family of pairwise Rudin--Keisler incomparable free ultrafilters $(\F_y)_{y\in Y}$ on $\IN$. Let $\Fr\defeq\{F\subseteq\IN:|\IN\setminus F|<\w\}$ be the Fr\'echet filter on $\IN$.

Let $\Tau_Y$ be the topology on $Y$ consisting of the sets $W\subseteq Y$ satisfying the following conditions:
\begin{itemize}
\item for every $y\in W\setminus\{a,b\}$ there exists a set $F\in \F_y$ such that $S_y[3F\pm1]\subseteq W$;
\item if $a\in W$, then there exist sets $F\in \F_a$ and $L\in\Fr$  such that  $S_a[3L]\cup S_a[3F\pm1]\subseteq W$;
\item if $b\in W$, then there exists a set $F\in \F_b$ such that  $S_a[3F]\cup S_a[3F\pm1]\subseteq W$.
\end{itemize}
From now on we consider $Y$ as a topological space endowed with the topology $\Tau_Y$. 

By analogy with Claim~\ref{cl:base} we can prove the following three claims describing neighborhood bases at points of the topological space $Y$.

\begin{claim}\label{cl:base2} For every $y\in Y\setminus\{a,b\}$, the family
$$\mathcal B_y\defeq\big\{\{y\}\cup S_y[3F\pm1]:F\in\F_y\big\}$$is a neighborhood base of the topology $\Tau_Y$ at $y$.
\end{claim}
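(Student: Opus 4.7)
The plan is to mimic the proof of Claim~\ref{cl:base} almost verbatim, with one new case to handle: the extra openness clause at the point $a$ must be verified whenever $a$ happens to lie in the candidate basic neighborhood.

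First I would fix $y \in Y \setminus \{a,b\}$ and $E \in \F_y$, set $W \defeq \{y\} \cup S_y[3E \pm 1]$, and verify that $W \in \Tau_Y$ by checking the three clauses from the definition of $\Tau_Y$ at every point of $W$. Since each $S_y(n) \subseteq X$ and $b \notin X$, while $y \ne b$ by hypothesis, the point $b$ cannot belong to $W$, so the third clause is vacuous. At $x = y$ the first clause is satisfied trivially by taking $F \defeq E$, since $S_y[3E \pm 1] \subseteq W$.

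For $x \in W \setminus \{y,a,b\}$ I would repeat the argument of Claim~\ref{cl:base}: from $x \in S_y(n)$ for some $n \in 3E \pm 1$ and $S_y(n) \in \Tau_X$, pick $m \in \IN$ with $O_m(x) \subseteq S_y(n)$; then $F \defeq \{k \in \IN : 3k - 1 \ge m\}$ lies in the free ultrafilter $\F_x$, and condition (iii) gives $S_x[3F \pm 1] \subseteq O_m(x) \subseteq S_y(n) \subseteq W$, as required by the first clause.

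The genuinely new case is $x = a$, which needs attention precisely when $a \in S_y(n)$ for some $n \in 3E \pm 1$. Running the preceding argument at $x = a$ already produces $m \in \IN$ with $O_m(a) \subseteq S_y(n) \subseteq W$ and a set $F \defeq \{k \in \IN : 3k - 1 \ge m\} \in \F_a$ with $S_a[3F \pm 1] \subseteq O_m(a) \subseteq W$. To satisfy the second clause it remains to exhibit $L \in \Fr$ with $S_a[3L] \subseteq W$; I would simply take the cofinite set $L \defeq \{k \in \IN : 3k \ge m\}$, so that condition (iii) yields $S_a[3L] \subseteq O_m(a) \subseteq W$, hence $S_a[3L] \cup S_a[3F \pm 1] \subseteq W$. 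This shows every $B \in \mathcal B_y$ is $\Tau_Y$-open, after which the first clause of the definition of $\Tau_Y$ at $y$ immediately delivers that $\mathcal B_y$ is a neighborhood base at $y$. The only mild obstacle is noticing that $L$ is required merely to be cofinite (not an ultrafilter set), so the standard ``tail'' choice works with no extra effort.
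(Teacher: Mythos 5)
Your proposal is correct and is essentially the argument the paper intends: the paper omits the proof entirely, stating only that Claims 4.1--4.3 follow ``by analogy with Claim~\ref{cl:base}'', and your write-up is exactly that analogy, with the one genuinely new case (the second openness clause when $a\in S_y(n)$) correctly identified and discharged by the cofinite choice $L=\{k\in\IN:3k\ge m\}\in\Fr$. Nothing is missing.
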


\begin{claim}\label{cl:base3} The family
$$\mathcal B_a\defeq\big\{\{a\}\cup S_a[3L]\cup S_a[3F\pm1]:L\in\Fr,\;F\in\F_a\big\}$$is a neighborhood base of the topology $\Tau_Y$ at $a$.
\end{claim}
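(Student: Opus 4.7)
The plan is to imitate the proof of Claim~\ref{cl:base}, with minor adaptations required by the presence of the extra ``axial'' term $S_a[3L]$ in the basic neighborhoods of $a$. There are two things to verify: that each element of $\mathcal B_a$ is $\Tau_Y$-open, and that every $\Tau_Y$-open neighborhood of $a$ contains a member of $\mathcal B_a$.

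For openness, fix $W\defeq\{a\}\cup S_a[3L]\cup S_a[3F\pm1]$ with $L\in\Fr$ and $F\in\F_a$, and check the three defining clauses of $\Tau_Y$ at every $y\in W$. Since $S_a$ takes values in $X\setminus\{a\}\subseteq Y\setminus\{b\}$, we have $W\subseteq X$, so $b\notin W$ and the third clause is vacuous. At $y=a$ the witnesses $L\in\Fr$ and $F\in\F_a$ already appearing in the definition of $W$ serve. For $y\in W\setminus\{a\}$ there is some $n\in 3L\cup(3F\pm1)$ with $y\in S_a(n)$; since $S_a(n)\in\Tau_X$ by condition (iii) and $\{O_m(y)\}_{m\in\IN}$ is a $\Tau_X$-base at $y$, pick $m$ with $O_m(y)\subseteq S_a(n)$. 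The cofinite, hence $\F_y$-large, set $G\defeq\{k\in\IN:3k-1\ge m\}$ then satisfies $S_y[3G\pm1]\subseteq O_m(y)\subseteq S_a(n)\subseteq W$ by condition (iii), fulfilling the first clause of the $\Tau_Y$-definition at~$y$.

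The base property at $a$ is immediate from the definition of $\Tau_Y$: if $W'\in\Tau_Y$ contains $a$, then the second clause yields $L'\in\Fr$ and $F'\in\F_a$ with $S_a[3L']\cup S_a[3F'\pm1]\subseteq W'$, whence $\{a\}\cup S_a[3L']\cup S_a[3F'\pm1]\in\mathcal B_a$ is a subset of $W'$.

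No genuine obstacle arises: the only departure from Claim~\ref{cl:base} is the bookkeeping at the distinguished point $a$, which is handled by simply carrying the pair $(L,F)$ through unchanged, while the verification at the remaining points of $W$ is a verbatim repetition of the corresponding step in Claim~\ref{cl:base}.
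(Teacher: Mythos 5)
Your proof is correct and is exactly the adaptation of Claim~\ref{cl:base} that the paper intends (the paper itself omits the proof, noting only that Claims~\ref{cl:base2}--\ref{cl:base4} follow ``by analogy with Claim~\ref{cl:base}''). All the relevant points are handled properly: $b\notin W$ because $S_a[\IN]\subseteq X$, the pair $(L,F)$ witnesses the second clause at $a$, and the cofinite set $G$ lies in the free ultrafilter $\F_y$ and gives $S_y[3G\pm1]\subseteq O_m(y)\subseteq S_a(n)$ via condition~(iii).
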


\begin{claim}\label{cl:base4} The family
$$\mathcal B_b\defeq\big\{\{b\}\cup S_a[3F\cup(3F\pm1)]:F\in\F_b\big\}$$is a neighborhood base of the topology $\Tau_Y$ at $b$.
\end{claim}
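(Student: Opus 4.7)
The plan is to mirror the argument of Claim~\ref{cl:base}, adapting it to the asymmetric neighborhood definition at~$b$. First I would verify that each set $W\defeq\{b\}\cup S_a[3F\cup(3F\pm1)]$ with $F\in\F_b$ is $\Tau_Y$-open, by checking the three conditions in the definition of $\Tau_Y$ at every point of $W$. At the point $b$ itself the required set $F$ is the $F$ we started with, so the third bullet in the definition of $\Tau_Y$ is satisfied by construction. A key observation is that condition~(iii) guarantees $a\notin\cl_X(S_a(n))$ for every $n\in\IN$, so in particular $a\notin S_a[3F\cup(3F\pm1)]$, which means $W\setminus\{b\}\subseteq Y\setminus\{a,b\}$ and the second bullet (the condition at $a$) is vacuous.

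It therefore remains to verify the first bullet at every $y\in W\setminus\{b\}$. Such a $y$ lies in $S_a(n)$ for some $n\in 3F\cup(3F\pm1)$. Since $S_a(n)\in\Tau_X$, I would choose $m\in\IN$ with $O_m(y)\subseteq S_a(n)$, and then take
$$F'\defeq\{k\in\IN:3k-1\ge m\}\in\F_y,$$
which belongs to the free filter $\F_y$. Condition (iii) applied to $y$ gives $S_y(k)\subseteq O_k(y)\subseteq O_m(y)$ for every $k\ge m$, whence $S_y[3F'\pm1]\subseteq O_m(y)\subseteq S_a(n)\subseteq W$, as required. This shows $W\in\Tau_Y$, so $\mathcal B_b\subseteq\Tau_Y$.

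For the base property, I would just unwind the definition: if $W\in\Tau_Y$ contains $b$, then by the third bullet there exists $F\in\F_b$ with $S_a[3F]\cup S_a[3F\pm1]\subseteq W$, hence $\{b\}\cup S_a[3F\cup(3F\pm1)]\subseteq W$, and this is an element of $\mathcal B_b$ contained in $W$. Combined with the previous step, this shows $\mathcal B_b$ is a neighborhood base at $b$.

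The only conceptual point that might trip up a careful reader is why one need not impose any compatibility at $a$ when checking that $W$ is open; that is resolved precisely by the clause $x\notin\cl_X(S_x(n))$ in condition~(iii), which I expect to be the main (mild) obstacle to a completely mechanical copy of the proof of Claim~\ref{cl:base}. Everything else is a routine transcription.
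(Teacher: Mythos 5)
Your proof is correct and is precisely the ``analogy with Claim~\ref{cl:base}'' argument that the paper invokes without writing out: openness of each $\{b\}\cup S_a[3F\cup(3F\pm1)]$ via the third bullet at $b$ and the cofinite-set trick at points of $S_a(n)$, plus the trivial unwinding of the definition for the base property. Your observation that $a\notin\cl_X(S_a(n))$ makes the clause at $a$ vacuous is exactly the one extra point needed beyond a mechanical copy, so nothing is missing.
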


By Claims~\ref{cl:base2}--\ref{cl:base4}, the Hausdorff property of the space $X$ implies the Hausdorff properties of the subspaces $Y\setminus\{a\}$ and $Y\setminus\{b\}$ of the topological space $Y$.

Now we prove that the topological space $Y$ has all the properties, required in Theorem~\ref{t:main2}: it is strongly rigid, Brown, $k_2$-metrizable, semi-Hausdorff, and contains a non-closed compact metrizable subset. Those properties of $Y$ are established in the following claims.

\begin{claim} The subset $K\defeq\{a\}\cup s_a[3\IN]$ of the space $Y$ is compact and metrizable but not closed in $Y$.
\end{claim}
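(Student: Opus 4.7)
The plan is to show that the subspace topology that $(Y,\Tau_Y)$ induces on $K$ agrees with the convergent-sequence topology of the ordinal space $\w+1$, so that $K$ is compact and metrizable, and then to produce a point of $\cl_Y(K)\setminus K$ by exhibiting $b$ as a limit of the sequence $(s_a(3n))_{n\in\IN}$.

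First I would check that every $s_a(3m)$ is isolated in $K$. The sets $\{S_a(k)\}_{k\in\IN}$ are pairwise disjoint by (iv) and each misses $a$ by (iii), giving $S_a(3m)\cap K=\{s_a(3m)\}$. Since $S_a(3m)\in\Tau_X$ and contains $s_a(3m)$, some $O_k(s_a(3m))\subseteq S_a(3m)$, whence $S_{s_a(3m)}(k)\subseteq S_a(3m)$ for all sufficiently large $k$ by (iii); freeness of $\F_{s_a(3m)}$ then supplies $F\in\F_{s_a(3m)}$ with $S_{s_a(3m)}[3F\pm 1]\subseteq S_a(3m)$, and Claim~\ref{cl:base2} yields a $\Tau_Y$-neighborhood of $s_a(3m)$ whose trace on $K$ is $\{s_a(3m)\}$. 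Next I would compute the trace on $K$ of a basic neighborhood $\{a\}\cup S_a[3L]\cup S_a[3F\pm 1]$ of $a$ from Claim~\ref{cl:base3}: the $S_a[3F\pm 1]$ part contributes nothing to $K$ by (iii)--(iv), while $S_a[3L]\cap K=\{s_a(3m):m\in L\}$ is cofinite in $K$ because $L\in\Fr$ and the $s_a(3m)$ are pairwise distinct by (i). Every cofinite subset of $K$ containing $a$ arises in this way, so $K$ inherits the topology of $\w+1$, which is compact and metrizable.

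Finally, non-closedness of $K$ follows from Claim~\ref{cl:base4}: every $\Tau_Y$-neighborhood of $b$ contains $S_a[3F]$ for some $F\in\F_b$, and since $\F_b$ is a free ultrafilter the set $F$ is infinite, so such a neighborhood meets $K$ in the infinite set $\{s_a(3m):m\in F\}$, yielding $b\in\cl_Y(K)\setminus K$. The only subtle point in the whole argument is that the Fr\'echet filter $\Fr$, rather than the ultrafilter $\F_a$, governs the $3L$-part of the neighborhood base at $a$: that is exactly what makes the $s_a(3n)$ converge to $a$ in $K$ and thereby makes $K$ compact, while the presence of the $3F$-part (with $F\in\F_b$ infinite) in the base at $b$ is what prevents $b$ from being separated from $K$.
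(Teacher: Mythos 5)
Your proof is correct and follows essentially the same route as the paper's: compactness of $K$ comes from the fact that the basic neighborhoods of $a$ from Claim~\ref{cl:base3} trace cofinite sets on $K$ (equivalently, $(s_a(3n))_{n\in\IN}$ converges to $a$), and non-closedness comes from Claim~\ref{cl:base4} exactly as you argue. The only cosmetic difference is that you obtain metrizability by identifying the subspace topology on $K$ with that of $\w+1$ directly, whereas the paper notes that $K$ lies in the Hausdorff subspace $Y\setminus\{b\}$ and is countable, compact and Hausdorff, hence metrizable (while also remarking that $K$ is homeomorphic to $\{0\}\cup\{\frac1n:n\in\IN\}$).
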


\begin{proof} By Claim~\ref{cl:base3}, the sequence $\big(s_a(3n)\big)_{n\in\IN}$ converges to the point $a$ in the  topological space $Y$, which implies that the set $K=\{a\}\cup s_a[3\IN]$ is compact. Since $K$ is contained in the Hausdorff subspace $(X,\Tau_X')$ of $(Y,\Tau_Y)$, the compact space $K$ is Hausdorff and being countable, is metrizable (in fact, $K$ is homeomorphic to the compact subspace $\{0\}\cup\{\frac1n:n\in\IN\}$ of the real line). By Claim~\ref{cl:base4}, the point $b\notin K$ belongs to the closure of $K$ in $Y$, witnessing that the compact set $K$ is not closed in $Y$.
\end{proof}

\begin{claim} The space $Y$ is semi-Hausdorff and hence a $T_1$-space.
\end{claim}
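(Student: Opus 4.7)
Semi-Hausdorffness demands, for each ordered pair of distinct points $y_1,y_2\in Y$, a regular open set $W$ with $y_1\in W\not\ni y_2$; the $T_1$-conclusion then follows automatically, since $Y\setminus\{y\}$ becomes a union of such sets. The plan is, for every such pair, to produce an explicit $\Tau_Y$-open neighborhood $V$ of $y_1$ with $y_2\notin\cl_Y V$; since $V$ is open, $V\subseteq\mathrm{Int}(\cl_Y V)$, so $W\defeq\mathrm{Int}(\cl_Y V)$ will be a regular open set separating $y_1$ from $y_2$.

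The case $\{y_1,y_2\}\ne\{a,b\}$ is routine and relies only on the Hausdorffness of $(X,\Tau_X)$. I will pick the parameters of the basic neighborhood of $y_1$ supplied by Claims~\ref{cl:base2}--\ref{cl:base4} so deeply (cofinite $L$ and cofinite $F\in\F_{y_1}$, resp.\ cofinite $G\in\F_b$, with large minima) that $V$ lies inside a prescribed small $\Tau_X$-open neighborhood of either $y_1$ (when $y_1\in X\setminus\{a\}$) or $a$ (when $y_1\in\{a,b\}$). Hausdorffness of $X$ then yields a disjoint $\Tau_X$-open neighborhood of either $y_2$ or, when $y_2=b$, of $a$; the latter, by another deep-parameter application of Claim~\ref{cl:base4}, contains a basic $\Tau_Y$-neighborhood of $b$. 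In every sub-case we obtain a $\Tau_Y$-neighborhood of $y_2$ disjoint from $V$, so $y_2\notin\cl_Y V$.

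The main obstacle is the pair $\{a,b\}$, which cannot be Hausdorff-separated at all: every $\Tau_Y$-neighborhood of $a$ contains $S_a(3n)$ for cofinitely many $n$, every $\Tau_Y$-neighborhood of $b$ contains $S_a(3n)$ for $n$ in some member of $\F_b$, and these two families always meet. To handle this I will use the distinctness of $\F_a$ and $\F_b$ (guaranteed by their Rudin--Keisler incomparability) to fix $E\in\F_a$ with $E'\defeq\IN\setminus E\in\F_b$, and set
\[
V_a\defeq\{a\}\cup S_a[3\IN]\cup S_a[3E\pm1],\qquad V_b\defeq\{b\}\cup S_a[3E'\cup(3E'\pm1)].
\]
For each $n\in E'$ the $\Tau_Y$-open set $S_a(3n+1)\cup S_a(3n-1)$ is disjoint from $V_a$, so $S_a[3E'\pm1]\cap\cl_Y V_a=\emptyset$; any basic neighborhood $\{b\}\cup S_a[3G\cup(3G\pm1)]$ of $b$ contained in $\cl_Y V_a$ would then force $G\subseteq E$ and hence $E\in\F_b$, contradicting the choice of $E$. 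Therefore $b\notin\mathrm{Int}(\cl_Y V_a)$. Symmetrically, for each $n\in E$ the $\Tau_Y$-open set $S_a(3n)$ is disjoint from $V_b$, giving $S_a[3E]\cap\cl_Y V_b=\emptyset$; any basic neighborhood $\{a\}\cup S_a[3L]\cup S_a[3F\pm1]$ of $a$ contained in $\cl_Y V_b$ would require $L\subseteq E'$, impossible for a cofinite $L$ since $E$ is infinite. Hence $a\notin\mathrm{Int}(\cl_Y V_b)$.
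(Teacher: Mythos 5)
Your proof is correct and follows essentially the same route as the paper's: it splits into the pair $\{a,b\}$ versus all other pairs, handles the latter via the Hausdorffness of $X$ (the paper does this slightly more compactly by pulling back disjoint open sets along the retraction $r$), and separates $a$ from $b$ by a regular open set obtained from the distinctness of $\F_a$ and $\F_b$ together with the pairwise disjointness of the sets $S_a(n)$. Your closure computation for $V_a$ --- an open set disjoint from $V_a$ stays disjoint from $\cl_Y(V_a)$, while any basic neighborhood of $b$ contained in $\cl_Y(V_a)$ would have to meet it --- is exactly the contradiction used in the paper.
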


\begin{proof} Given two distinct points $x,y\in Y$, we should find a regular open set $U\subseteq Y$ such that $x\in U$ and $y\notin Y$. The definition of the topology $\Tau_Y$ ensures that the function $r:Y\to X$ is continuous. If $r(x)\ne r(y)$, then by the Hausdorff property of $X$, there exist disjoint open sets $V,W$ in $X$ such that $r(x)\in V$ and $r(y)\in W$. By the continuity of $r$, the preimages $r^{-1}[V]$ and $r^{-1}[W]$ are disjoint open sets in $Y$ such that $x\in r^{-1}[V]$ and $y\in r^{-1}[W]$. Let $U$ be the interior of the closure of the set $r^{-1}[V]$ in $Y$. It follows that $U$ is a regular open neighborhood of $x$ such that $\overline U\subseteq\overline{r^{-1}[V]}\subseteq Y\setminus r^{-1}[W]\subseteq Y\setminus\{y\}$.

It remains to consider the case $r(x)=r(y)$ which happens if and only if $\{x,y\}=\{a,b\}$.  Since the ultrafilters $\F_x$ and $\F_y$ are Rudin--Keisler incomparable, there are sets $F_x\in\F_x$ and $F_y\in\F_y$ such that $F_x\cap F_y=\emptyset$. By Claims~\ref{cl:base3} and \ref{cl:base4}, the sets 
$$U_x\defeq\{x\}\cup S_a[3\IN]\cup S_a[3F_x\pm1]\quad\mbox{and}\quad
U_y\defeq\{y\}\cup S_a[3\IN]\cup S_a[3F_y\pm1]$$are open neighborhoods of the points $x$ and $y$ in the space $Y$, respectively. 

Let $U$ be the interior of the closure $\overline{U_x}$ of the set $U_x$ in $Y$. We claim that $y\notin U$. In the opposite case, by Claim~\ref{cl:base3}, there exists a set $F\in\F_y$ such that
$S_a[3F\pm1]\subseteq \overline{U_x}$ and $F\subseteq F_y$. It follows from $F_x\cap F_y=\emptyset$ that $$U_x\subseteq Y\setminus S_a[3F_y\pm1]$$ and hence $\overline{U_x}\subseteq Y\setminus S_a[3F_y\pm1]$, by the openness of the set $S_a[3F_y\pm1]$ in $X$ and $Y$. Then we obtain a contradiction:
$$S_a[3F\pm1]\subseteq\overline{U_x}\subseteq Y\setminus S_a[3F_y\pm1]\subseteq Y\setminus S_a[3F\pm1],$$
which shows that $y\notin U$, witnessing that the space $Y$ is semi-Hausdorff.
\end{proof}

By analogy with Claim~\ref{cl:anticompact} one can prove

\begin{claim}\label{cl:anticompact2} The Hausdorff subspace $Y\setminus\{a\}$ of the space $Y$ is anticompact.
\end{claim}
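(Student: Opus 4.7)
The plan is to adapt the proof of Claim~\ref{cl:anticompact}. Suppose toward a contradiction that $Y\setminus\{a\}$ contains an infinite compact subspace $K$. Since the function $r:Y\to X$ is continuous and $a\notin K$, the restriction $r|_K$ is a continuous injection into the Hausdorff space $(X,\Tau_X)$, and hence a topological embedding. Therefore $r[K]$ is an infinite compact subset of the second-countable Hausdorff space $X$, so it contains a sequence of pairwise distinct points converging to some point of $r[K]$; pulling this sequence back through the embedding yields a sequence $(x_n)_{n\in\w}$ of pairwise distinct points of $K$ that converges in $Y$ to some $x\in K\subseteq Y\setminus\{a\}$.

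The argument now splits according to the location of the limit. If $x\in X\setminus\{a\}$, then at most one term of the sequence can equal $b$, so after dropping that term we may assume $(x_n)_{n\in\w}\subseteq X$. By Claim~\ref{cl:base2}, the family $\{\{x\}\cup S_x[3F\pm1]:F\in\F_x\}$ is a neighborhood base of $x$ in $Y$, which is exactly the shape used in the limit step of Claim~\ref{cl:anticompact}. The Rudin--Keisler ultrafilter argument of that claim, using the auxiliary function $r_x:X\to\w$ with $r_x^{-1}(0)=X\setminus S_x[\IN\setminus\{1\}]$ and $r_x^{-1}(m)=S_x[\{3m-1,3m,3m+1\}]$ for $m\in\IN$, therefore transfers verbatim to produce $F\in\F_x$ for which $\{x\}\cup S_x[3F\pm1]$ misses infinitely many $x_n$, contradicting convergence.

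If instead $x=b$, then distinctness forces infinitely many $x_n$ to lie in $X\setminus\{a\}$, and we pass to that subsequence. Here the neighborhood base of $b$ provided by Claim~\ref{cl:base4} is $\{\{b\}\cup S_a[3F\cup(3F\pm1)]:F\in\F_b\}$, indexed by $\F_b$ but expressed in terms of $S_a$. I would therefore replace $r_x$ by $\rho:X\to\w$ defined by $\rho^{-1}(0)\defeq X\setminus S_a[\IN\setminus\{1\}]$ and $\rho^{-1}(m)\defeq S_a[\{3m-1,3m,3m+1\}]$ for $m\in\IN$, and run the identical ultrafilter dichotomy on $\F_b$ to extract $F\in\F_b$ such that $\{b\}\cup S_a[3F\cup(3F\pm1)]$ omits infinitely many $x_n$, again contradicting convergence. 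The main technical point is the opening reduction, where one must verify that $r|_K$ is an embedding in order to extract a convergent sequence of distinct points; once this is in place, the combinatorics of Claim~\ref{cl:anticompact} go through essentially unchanged, with only a bookkeeping adjustment for the case $x=b$.
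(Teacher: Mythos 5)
Your proof is correct and is precisely the ``analogy with Claim~\ref{cl:anticompact}'' that the paper leaves to the reader: you reduce to a convergent sequence of distinct points via the continuous injection $r{\restriction}_{Y\setminus\{a\}}$ into the Hausdorff space $X$, and then run the same ultrafilter dichotomy, split into the two cases $x\in X\setminus\{a\}$ (using Claim~\ref{cl:base2}) and $x=b$ (using Claim~\ref{cl:base4} with the auxiliary function built from $S_a$). No gaps; the case bookkeeping at $b$ is handled correctly.
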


\begin{claim}\label{cl:k-metrizable} The space $Y$ is $k_2$-metrizable.
\end{claim}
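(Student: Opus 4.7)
The plan is to construct a metrizable topological space $M$ and a bijective continuous proper map $f:M\to Y$ satisfying the universal property defining $k_2$-metrizability. Endow the underlying set $Y$ with the metrizable topology $\Tau_M$ in which every point of $Y\setminus\{a\}$ is isolated and the point $a$ has the countable neighborhood base $\big\{\{a\}\cup\{s_a(3n):n\ge N\}:N\in\IN\big\}$. Write $M\defeq(Y,\Tau_M)$; as a disjoint topological sum of the convergent sequence $K\defeq\{a\}\cup s_a[3\IN]$ and the countable discrete space $Y\setminus K$, the space $M$ is metrizable. Let $f:M\to Y$ be the identity bijection; its continuity at the unique non-isolated point $a$ is immediate, since every $\Tau_Y$-neighborhood of $a$ contains a set $\{a\}\cup S_a[3L]\cup S_a[3F\pm 1]$ with $L\in\Fr$ and $F\in\F_a$, which in turn contains the $\Tau_M$-neighborhood $\{a\}\cup\{s_a(3n):n\in L\}$ of $a$.

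Compact subsets of $M$ are precisely those whose intersection with $Y\setminus K$ is finite, so the properness of $f$ reduces to showing that every compact $C\subseteq Y$ has $C\setminus K$ finite. If $a\notin C$, then $C\subseteq Y\setminus\{a\}$ and Claim~\ref{cl:anticompact2} gives $C$ finite. If $a\in C$ and $C\setminus K$ were infinite, one proceeds in analogy with Claim~\ref{cl:anticompact}: using the continuous retraction $r:Y\to X$ and the fact that the identity map $(r(C),\Tau_Y|_{r(C)})\to(r(C),\Tau_X|_{r(C)})$ is a homeomorphism (by compactness together with the Hausdorffness of $\Tau_X$), one extracts an infinite $\Tau_X$-convergent sequence $(d_n)\subseteq C\setminus K$ with limit $y_0\in C$, and then invokes the Rudin--Keisler incomparability of the family $(\F_y)_{y\in Y}$ to find a $\Tau_Y$-neighborhood of $y_0$ missing cofinitely many of the $d_n$, a contradiction.

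For the $k_2$-property, fix a continuous $g:C'\to Y$ with $C'$ compact Hausdorff; we must show that $g^{-1}(V)$ is open in $C'$ for each $V$ in the natural subbase of $\Tau_M$. If $V=\{a\}\cup\{s_a(3n):n\ge N\}$, then the established properness gives that $g(C')\setminus K$ is finite, and since $Y$ is $T_1$ the finitely many fibres $g^{-1}(y)$ with $y\in(Y\setminus V)\cap g(C')$ are closed in $C'$, so $g^{-1}(V)$ is open as the complement of a finite union of closed sets. If $V=\{y\}$ with $y\in Y\setminus\{a\}$, we must show that $g$ is locally constant equal to $y$ at every $c\in g^{-1}(y)$: if, on the contrary, some net $(c_\alpha)\subseteq C'\setminus g^{-1}(y)$ satisfies $c_\alpha\to c$ in $C'$, then $g(c_\alpha)\to y$ with $g(c_\alpha)\ne y$, and the Rudin--Keisler incomparability of $\F_y$ with the other $\F_{y'}$'s (exactly as in the proof of Claim~\ref{cl:self}) provides a $\Tau_Y$-neighborhood of $y$ missing $g(c_\alpha)$ for cofinally many $\alpha$, a contradiction.

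The principal obstacle is the case $y=b$ in this last step: the neighborhood base of $b$ involves the hybrid sets $S_a[3F\cup(3F\pm 1)]$ with $F\in\F_b$, mixing indices for the sequence $(s_a(3n))$ that $\Tau_Y$-converges to $a$ with the usual $S_a[3F\pm 1]$. The R-K argument must therefore juggle the interaction of $\F_b$ with $\F_a$ (which governs the convergence of $s_a(3n)$ to $a$) to ensure that no net $(c_\alpha)\to c$ with $g(c_\alpha)\to b$ can sneak into $b$ along the $s_a(3n)$-sequence without being eventually equal to $b$ itself.
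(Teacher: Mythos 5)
There are two genuine gaps. First, your metrizable space $M$ is wrong: you give $a$ the neighborhood base $\{a\}\cup\{s_a(3n):n\ge N\}$, i.e.\ only the distinguished \emph{points} $s_a(3n)$ accumulate at $a$ in $M$. But in $\Tau_Y$ every basic neighborhood of $a$ has the form $\{a\}\cup S_a[3L]\cup S_a[3F\pm1]$ with $L$ cofinite, so it absorbs the whole open sets $S_a(3n)$ for all but finitely many $n$. Hence if you pick $x_n\in S_a(3n)\setminus\{s_a(3n)\}$ (possible, since each $S_a(3n)$ is a nonempty open subset of the Brown space $X$, which has no isolated points), the set $C\defeq\{a\}\cup\{x_n:n\in\IN\}$ is compact in $Y$ while $C\setminus K$ is infinite, so $f^{-1}[C]=C$ is not compact in your $M$: the identity map is \emph{not} proper, and your reduction ``properness $\Leftrightarrow$ $C\setminus K$ finite for all compact $C$'' is aimed at a false target. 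The paper avoids this by defining the metric $d$ so that \emph{every} point of $S_a(3n)$ lies at distance $1/n$ from $a$; then the sets $S_a(3n)$, not just the points $s_a(3n)$, shrink to $a$ in $M$, matching the $\Tau_Y$-convergence.

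Second, your verification of the $k_2$-property is incomplete precisely where the claim has its content. You reduce to showing $g^{-1}(y)$ open for each $y\ne a$, sketch an ultrafilter argument for $y\in Y\setminus\{a,b\}$ (where in fact no ultrafilter argument is needed: $g[C']\setminus\{b\}$ is a compact subset of the Hausdorff space $Y\setminus\{b\}$ on which the identity $M\to Y$ is automatically an embedding), and then explicitly flag the case $y=b$ as ``the principal obstacle'' to be ``juggled'' --- but $b$ is the unique point at which $Y$ fails to be Hausdorff and at which $K$ fails to be closed, so this case \emph{is} the theorem. The paper's proof spends most of its length exactly here: it uses normality of the compact Hausdorff domain $H$ to separate $g^{-1}(a)$ from $g^{-1}(b)$ and an anticompactness-style argument to show that $g^{-1}(b)$ is clopen in $H$, after which continuity of $i^{-1}\circ g$ follows by gluing over the two clopen pieces. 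Without an actual argument for $y=b$ (and with the non-proper $M$ repaired), your proposal does not yet establish the claim.
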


\begin{proof} On the space $Y$ consider the metric $d:Y\times Y\to[0,1]$ defined by 
$$
d(x,y)\defeq\begin{cases}0&\mbox{if $x=y$};\\
\tfrac1n+\tfrac1m&\mbox{if $S_a(3n)\ni x\ne y\in S_a(3m)$ for some $n,m\in\IN$};\\
\tfrac1m&\mbox{if $x=a$ and $y\in S_a(3m)$ for some $m\in\IN$};\\
\tfrac1n&\mbox{if $y=a$ and $x\in S_a(3n)$ for some $n\in\IN$};\\
1&\mbox{otherwise}.
\end{cases}
$$
Observe that the point $a$ is a unique non-isolated point of the metric space $M\defeq (Y,d)$. The definition of the topology of the space $Y$ at $a$ ensures that the identity map $i:M\to Y$ is continuous. 

To prove that the map $i$ is proper, take any infinite compact subset $K\subseteq Y$. The anticompactness of the space $Y\setminus\{a\}$ implies that $a\in K$ and hence $r[K]=K\setminus\{b\}$. Claims~\ref{cl:base3} and \ref{cl:base4} imply that the map $r:Y\to Y\setminus\{a\}$ is continuous and hence the subspace $K\setminus\{b\}=r[K]$ of the Hausdorff space $Y\setminus\{b\}$ is compact. We claim that the set $K\setminus\{b\}$ is compact in the metric space $M$. In the opposite case we can find a sequence $(x_n)_{n\in\w}$ of pairwise distinct points of $K\setminus\{a,b\}$ that has no convergent subsequence in the metric space $M$.
The continuity of the identity map $Y\setminus\{b\}\to X$ and the second-countability of the Hausdorff space $X$ imply that the compact subspace $K\setminus\{b\}$ of the Hausdorff space $Y\setminus\{b\}$ is second-countable and hence metrizable. Then the sequence  $(x_n)_{n\in\omega}$ has a subsequence $(x_{n_k})_{k\in\omega}$ that converges to some point $x_\infty$ of the compact metrizable space $K\setminus\{b\}$. Replacing the sequence $(x_n)_{n\in\w}$ by the convergent subsequence $(x_{n_k})_{k\in\w}$, we lose no generality assuming that the sequence $(x_n)_{n\in\omega}$ converges to $x_\infty$.  The anticompactness of the space $Y\setminus\{a\}$ implies that $x_\infty=a$ and hence the sequence $(x_n)_{n\in\w}$ converges to $a$ in the compact metrizable space $K\setminus\{b\}$. Repeating the argument from the proof of Claim~\ref{cl:anticompact}, we can show that the set $\{n\in\w:x_n\notin S_a[3\IN]\}$ is finite.  For every $m\in\IN$ the condition (iii) of the choice of the set $S_a(3m)$ ensures that $a\notin\cl_X(S_a(3m))=\emptyset$ and hence the set $\{n\in\w:x_n\in S_a(3m)\}$ is finite (by the convergence of the sequence $(x_n)_{n\in\w}$ to $a$. Now the definition of the metric $d$ ensures that the sequence $(x_n)_{n\in\w}$ converges to $a$ in the metric space $M$, which contradicts the choice of $(x_n)_{n\in\w}$. This contradiction finishes the proof of the compactess of the set $K\setminus\{b\}$ in the metric space $M$. The compactness of $K\setminus\{b\}$ implies the compactness of the set $K$ in the metric space $M$, and the properness of the identity map $i:M\to Y$.

To prove that the space $Y$ is $k_2$-metrizable, take any continuous map $f:H\to Y$ defined on a compact Hausdorff space $H$. We need to prove that the map $i^{-1}\circ f:H\to M$ is continuous. The compactness of $H$ and the continuity of $f$ imply that the image $K\defeq f[H]$ is a compact subset of the $T_1$-space $Y$.
Consider the open subspace $H_b\defeq H\setminus f^{-1}(b)$ of the compact Hausdorff space $H$.

By the properness of identity map $i:M\to Y$, the set $K$ remains compact with respect to the metric $d$. Since $b$ is an isolated point of the metric space $M$, then set $K\setminus\{b\}$ is compact in $M$ and also in the topological space $Y$. Since the space $Y\setminus\{b\}$ is Hausdorff, the restriction $i{\restriction}_{K\setminus\{b\}}:K\setminus\{b\}\to Y\setminus\{b\}$ is a topological embedding and hence the restriction $i^{-1}\circ f{\restriction}_{H_b}:H_b\to M$ is continuous, being the composition of the continuous maps $f{\restriction}_{H_b}$ and $i^{-1}{\restriction}_{K\setminus\{b\}}$. It remains to prove that the set $H_b$ is closed in  $H$.

If $b\notin K$, then $H_b=H$ is closed in $H$. If $K$ is finite, then $H_b=\bigcup_{y\in K\setminus \{b\}}f^{-1}(y)$ is closed in $H$ as a finite union of preimages of points (which are closed by the continuity of $f$ and the $T_1$ separation property of $Y$).

So, assume that the compact set $K$ is infinite and  $b\in K$. By the definition of the metric $d$, the infinite compact set $K$ of the metric space $M$ has the point  $a$ as the unique non-isolated point. Then $K$ is countable and hence $K\setminus\{a\}=\{x_n\}_{n\in\w}$ for some sequence of pairwise distinct points that converge to $a$, starting with $x_0=b$.

By the $T_1$ separation property of the space $Y$, for every $y\in Y$ the preimage $f^{-1}(y)$ is a closed subset of the compact Hausdorff space $H$. By the normality of the compact Hausdorff space $H$, there exist two disjoint open sets $U,V\subseteq H$ such that $f^{-1}(a)\subseteq U$ and $f^{-1}(b)\subseteq V$. We claim that the set $\Omega\defeq\{n\in\IN:f^{-1}(x_n)\cap V\ne\emptyset\}$ is finite. In the opposite case, we can repeat the argument of the proof of Claim~\ref{cl:anticompact} and find an open neighborhood $O_b\subseteq Y$ of $b$ such that the set $\Lambda=\{n\in\Omega:x_n\notin O_b\}$ is infinite.  For every $n\in\Lambda$, choose a point $z_n\in V\cap f^{-1}(x_n)$. By the compactness of $H$, there exists a point $c\in H$ such that every neighborhood $O_c\subseteq H$ of $c$ contains infinitely many points $z_n$, $n\in\Lambda$. Since the open neighborhood $f^{-1}[O_b]$ of the set $f^{-1}(b)$ contains no point $z_n$ with $n\in\Lambda$, the point $c$ cannot belong to the set $f^{-1}(b)=f^{-1}(x_0)$.
Since $\{z_n\}_{n\in\Lambda}\subseteq V\subseteq H\setminus U$, the point $c$ belongs to the closed set $H\setminus U$ and hence $c\notin f^{-1}(a)$. Then $c\in f^{-1}(x_k)$ for some $k\in\IN$. Using the convergence of the sequence $(x_n)_{n\in\IN}$ to $a$ in the Hausdorff space $Y\setminus\{b\}$, we can find an open neighborhood $W\subseteq Y$ of $x_k$ such that $\{n\in\IN:x_n\in W\}=\{k\}$. Then $f^{-1}[W]$ is a neighborhood of the point $c$ containing no point $z_n$ with $n\in\Lambda\setminus\{k\}$, which contradicts the choice of the point $c$. This contradiction shows that the set $\Omega$ is finite. Then the set $H_b=H\setminus f^{-1}(b)=(H\setminus V)\cup\bigcup_{n\in\Omega}f^{-1}(x_n)$ is closed in $H$ and its complement $f^{-1}(b)=H\setminus H_b$ is open in $H$. The restriction $i^{-1}\circ f{\restriction}_{f^{-1}(b)}:f^{-1}(b)\to \{b\}\subseteq M$ is constant and hence continuous. The continuity of the restrictions $i^{-1}\circ f{\restriction}_{f^{-1}(b)}$ and $i^{-1}\circ f{\restriction}_{H_b}$ and the openness of the sets $f^{-1}(b)$ and $H_b=H\setminus f^{-1}(b)$ in $H$ imply the continuity of the map $i^{-1}\circ f:H\to M$ and finally, the $k_2$-metrizability of the space $Y$. 
\end{proof}

\begin{claim}\label{cl:Brown2} The space $Y$ is Brown.
\end{claim}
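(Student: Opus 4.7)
My plan is to adapt the proof of Claim~\ref{cl:Brown} to the non-Hausdorff setting of $Y$. I will assume for contradiction that $I\defeq\cl_Y(U')\cap\cl_Y(V')$ is finite for some nonempty $U',V'\in\Tau_Y$, and produce a point $\lambda=\lambda_{2^k(2m+1)}$ that belongs to $I$ and yet simultaneously to a closed subset of $X$ disjoint from $I$. Two preparatory observations underlie the argument. First, inspection of the bases in Claims~\ref{cl:base2}--\ref{cl:base4} shows that every nonempty $\Tau_Y$-open set contains an infinite subset of $X\setminus\{a,b\}$, so $Y$ has no isolated points and I can pick $u\in U'$, $v\in V'$ inside $X\setminus(\{a\}\cup I)$. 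Second, $\Tau_X\subseteq\Tau_Y$: for any $\Tau_X$-open $V\subseteq X$ and any $y\in V$, a sufficiently high cofinite tail of indices gives the required $\Tau_Y$-condition at $y$ (and at $a$, if $a\in V$) by condition (iii), using that cofinite sets belong to every free ultrafilter and to $\Fr$; the $b$-condition is vacuous since $b\notin X$.

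With these in hand, the core argument mirrors Claim~\ref{cl:Brown}. Applying Hausdorffness of $X$ to the finite set $I^*\defeq(I\cap X)\cup\{a\}$, which avoids $u$ and $v$, I find $\Tau_X$-open $O_u\ni u$ and $O_v\ni v$ with $\cl_X(O_u)\cap I^*=\emptyset=\cl_X(O_v)\cap I^*$. By the second observation, $U'\cap O_u$ and $V'\cap O_v$ are $\Tau_Y$-open, so Claim~\ref{cl:base2} delivers nonempty $\Tau_X$-open subsets $\tilde U\defeq S_u[3F\pm1]\subseteq U'\cap O_u$ and $\tilde V\defeq S_v[3F'\pm1]\subseteq V'\cap O_v$. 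Choosing basic sets $U_k\subseteq\tilde U$ and $U_m\subseteq\tilde V$ and setting $\lambda\defeq\lambda_{2^k(2m+1)}$, I have $\lambda\in\cl_X(U_k)\cap\cl_X(U_m)\subseteq X\setminus I^*$, whence $\lambda\ne a$ and $\lambda\notin I$. Conversely, any $\Tau_Y$-neighborhood $W$ of $\lambda$ contains $S_\lambda[3E\pm1]$ for some $E\in\F_\lambda$ (by Claim~\ref{cl:base2}), and condition~(v) places $S_\lambda[3E+1]\subseteq U_k\subseteq U'$ and $S_\lambda[3E-1]\subseteq U_m\subseteq V'$, forcing $\lambda\in I$---the desired contradiction.

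The main obstacle compared with Claim~\ref{cl:Brown} is the failure of Hausdorffness of $Y$: the pair $(a,b)$ cannot be separated by disjoint open sets, so the Hausdorff separation step of Claim~\ref{cl:Brown} has no direct analogue in the ambient space $Y$. The remedy is to restrict $u,v$ to $X\setminus\{a\}$ (which is possible by the first preparatory observation) and to enlarge the forbidden set from $I\cap X$ to $I^*$, ensuring that the resulting $\lambda$ avoids $a$. The point $b$ requires no special care because every $\lambda_n$ lies in $X$ by construction, so $\lambda\ne b$ is automatic.
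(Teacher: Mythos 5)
Your proof is correct and follows essentially the same route as the paper's: both assume $I=\cl_Y(U')\cap\cl_Y(V')$ is finite, enlarge the forbidden set to include $a$ (and $b$), pick $u,v$ in $X$ outside it, separate from it by $\Tau_X$-open sets with disjoint closures, locate basic sets $U_k,U_m$ inside $S_u[3F_u\pm1]$ and $S_v[3F_v\pm1]$, and derive the contradiction at $\lambda_{2^k(2m+1)}$ via condition (v). Your explicit verification that $\Tau_X\subseteq\Tau_Y$ is a clean substitute for the paper's direct appeal to condition (iii), but the argument is the same.
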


\begin{proof} Assuming that the space $Y$ is not Brown, we can find two nonempty open sets $U,V\in\Tau_Y$ such that the intersection $I\defeq\cl_Y(U)\cap\cl_Y(V)$ is finite. Then the set $J\defeq I\cup\{a,b\}$ is finite as well. Claims~\ref{cl:base2}--\ref{cl:base4} imply that the semi-Hausdorff space $Y$ has no isolated points. Then there exist points $u\in U\setminus J\subseteq X$ and $v\in V\setminus J\subseteq X$.
Since the space $X$ is Hausdorff, there exist open sets $O_u,O_v\subseteq X$ such that $u\in O_u$, $v\in O_v$, and $\cl_X(O_u)\cap J=\emptyset=J\cap \cl_X(O_v)$. By Claim~\ref{cl:base2} and the property (iii) of the functions $S_u$ and $S_v$, there exist  sets $F_u\in \F_u$ and $F_v\in F_v$ such that $S_u[3F_u\pm1]\subseteq O_u\cap U$ and $S_v[3F_v\pm1]\subseteq O_v\cap V$. Find $k,m\in\w$ such that $U_k\subseteq S_u[3F_u\pm1]$ and $U_m\subseteq S_v[3F_v\pm1]$. Then for the point $x\defeq\lambda_{2^k(2m+1)}$ we have $x\in \overline U_k\cap\overline U_m.$
We claim that $x\in \cl_{Y}(U)\cap\cl_{Y}(V)$.

Fix any neighborhood $W\in\Tau_Y$ of $x$. By Claims~\ref{cl:base2} and \ref{cl:base3} and the property (v) of the function $S_x$, there exists a set $F\in\F_x$ such that $$S_x[3F\pm 1]\subseteq W,\;\;S_x[3F+1]\subseteq U_k\subseteq S_u[3F_u\pm1]\subseteq U,\mbox{ \  and \ }S_x[3F-1]\subseteq U_m\subseteq S_v[3F_v\pm 1]\subseteq V,$$ which implies that $W\cap U\ne\emptyset\ne W\cap V$.

Therefore, $x\in\cl_{Y}(U)\cap\cl_{Y}(V)=I\subseteq J$. On the other hand,
$$x\in\cl_{X}(U_k)\cap\cl_{X}(U_m)\subseteq\cl_{X}(O_u)\cap\cl_{X}(O_v)\subseteq X\setminus J.$$
This contradiction completes the proof of the Brown property of the topological space $Y$.
\end{proof}

\begin{claim}\label{cl:self2} The space $Y$ is strongly rigid.
\end{claim}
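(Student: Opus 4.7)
The plan is to mimic the argument of Claim~\ref{cl:self} verbatim, adjusting only for the three different shapes of neighborhood bases at points of $Y$ produced by Claims~\ref{cl:base2}--\ref{cl:base4}. Suppose $f\colon Y\to Y$ is a continuous map that is not the identity, pick $c\in Y$ with $f(c)\ne c$, set $y\defeq f(c)$, and consider
\[
E\defeq\{x\in Y\setminus\{y\}:f(x)=y\},
\]
which is closed in $Y\setminus\{y\}$ by the continuity of $f$ and the $T_1$ property of $Y$. The goal reduces to showing that $E$ is also open in $Y\setminus\{y\}$: once this is done, Lemma~\ref{l:vertex} applied to the Brown $T_1$-space $Y$ (Claim~\ref{cl:Brown2}) gives that $Y\setminus\{y\}$ is connected, so the nonempty clopen set $E$ coincides with $Y\setminus\{y\}$. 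Since Claims~\ref{cl:base2}--\ref{cl:base4} show that $Y$ has no isolated points, $Y\setminus\{y\}$ is dense in $Y$, so continuity of $f$ forces $f[Y]=\{y\}$, i.e., $f$ is constant, witnessing that $Y$ is strongly rigid.

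To prove the openness of $E$ at a given point $x\in E$, I would reproduce the Rudin--Keisler argument from Claim~\ref{cl:self}. Choose the ``large'' canonical open neighborhood $O_y$ of $y=f(x)$ (namely $\{y\}\cup S_y[\IN]$ if $y\in Y\setminus\{a,b\}$, and $\{y\}\cup S_a[\IN]$ if $y\in\{a,b\}$) together with a partition function $r_y\colon Y\to\w$ whose level sets are the triples of basic slices of the form $S_\bullet[\{3n-1,3n,3n+1\}]$. Using the continuity of $f$ at $x$ together with the neighborhood base at $x$ supplied by Claim~\ref{cl:base2}, \ref{cl:base3}, or~\ref{cl:base4} (depending on whether $x\notin\{a,b\}$, $x=a$, or $x=b$), split $\IN$ into index sets $I_i$ and $J_i$ according to whether the corresponding basic slice near $x$ is mapped into $\{y\}$ by $f$ or not. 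Assuming some $J_i$ belongs to $\F_x$, choose a selector $\varphi\colon J_i\to Y\setminus f^{-1}(y)$ and form $r_y\circ f\circ\varphi$; the Rudin--Keisler incomparability of $\F_x$ and $\F_y$ then yields sets $F_x\in\F_x$ and $F_y\in\F_y$ with $F_y\cap r_y[f[\varphi[F_x]]]=\emptyset$, and the smaller neighborhood of $y$ determined by $F_y$ contradicts the continuity of $f$ at $x$ exactly as in Claim~\ref{cl:self}. Consequently each $I_i$ lies in $\F_x$, and intersecting the corresponding basic neighborhoods yields an open neighborhood of $x$ that $f$ maps into $\{y\}$.

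The main obstacle is the bookkeeping forced by the special points $a$ and $b$. When $y=a$, the neighborhood base at $y$ involves both an element of $\F_a$ and a cofinite set from $\Fr$, so the ``good'' indices must be assembled from an ultrafilter element together with a cofinite set; when $y=b$, the base at $y$ is built from slices $S_a(\cdot)$ rather than nonexistent slices $S_b(\cdot)$, so the partition function $r_y$ should be the $a$-indexed partition composed with the retraction $r\colon Y\to X$ from Section~\ref{t:main2}. Symmetric adjustments are needed on the $x$-side: when $x=a$ the indices $3n$ (governed by $\Fr$) and $3n\pm1$ (governed by $\F_a$) must be treated separately, and when $x=b$ the slices $S_x(\cdot)$ are replaced by $S_a(\cdot)$ and the partition of $\IN$ again splits into $3n$ and $3n\pm1$ parts. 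Once these cases are folded uniformly into the Rudin--Keisler step, the proof of Claim~\ref{cl:self} transfers without further change and delivers the strong rigidity of $Y$.
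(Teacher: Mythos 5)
There is a genuine gap at the heart of your plan: the Rudin--Keisler step does \emph{not} transfer to the case $y\defeq f(c)\in\{a,b\}$, and in particular not to $y=a$. The contradiction in Claim~\ref{cl:self} is obtained because every basic neighborhood of $y$ has the form $\{y\}\cup S_y[3F\pm1]$ with $F$ ranging over the ultrafilter $\F_y$, so continuity of $f$ at $x$ forces the index function $r_y\circ f\circ\varphi$ to push $\F_x$ onto $\F_y$, contradicting incomparability. But the basic neighborhoods of $a$ are $\{a\}\cup S_a[3L]\cup S_a[3F\pm1]$ with $L$ only required to be cofinite. If the selected points $f(\varphi(n))$ land in slices $S_a(3m_n)$ with $m_n\to\infty$, then they eventually enter \emph{every} basic neighborhood of $a$, so continuity at $x$ imposes only a Fr\'echet-type condition on these indices, which the image of any free ultrafilter satisfies; Rudin--Keisler incomparability then yields no contradiction, the assumption $J_i\in\F_x$ cannot be refuted, and the openness of $E$ at $x$ cannot be established by this route. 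This is precisely why the paper splits the proof into two cases: in Case 1 it assumes some $c\in Y\setminus\{a,b\}$ satisfies $c\ne f(c)\notin\{a,b\}$, so the target point is purely ultrafilter-governed and the verbatim transfer works on $Z=Y\setminus\{a,b,f(c)\}$; in Case 2 (every moved point of $Y\setminus\{a,b\}$ is sent into $\{a,b\}$) it replaces your set $E$ by $\{z\in Z:f(z)\in\{a,b\}\}$ with $Z=Y\setminus\{a,b\}$ and proves its openness by an elementary separation argument (any $x\notin\{a,b\}$ can be separated from the pair $\{a,b\}$ by disjoint open sets, because $r(a)=r(b)$), with no ultrafilters involved.

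A second omission: in Case 2 the subcase $E=\emptyset$ leaves $f$ equal to the identity on $Y\setminus\{a,b\}$ but says nothing yet about $f(a)$ and $f(b)$; the paper needs an additional argument, using that $\F_a\ne\F_b$ together with the shape of the bases in Claims~\ref{cl:base3} and \ref{cl:base4}, to exclude the possibility that $f$ moves $a$ or $b$ (for instance swaps them). Your proposal has no counterpart of this step. So while your overall frame (a clopen subset of a connected complement, plus Rudin--Keisler incomparability) is exactly right away from $\{a,b\}$, the treatment of the two special points is not mere bookkeeping; it requires the separate case analysis carried out in the paper.
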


\begin{proof} Take any continuous map $f:Y\to Y$.  Separately we shall consider two cases.
\smallskip

1. First assume that there exists a point $c\in Y\setminus\{a,b\}$ such that $c\ne f(c)\notin\{a,b\}$. Since $Y$ is a Brown space, its subspace $Z\defeq Y\setminus\{a,b,f(c)\}$ is connected, by Lemma~\ref{l:vertex}. Since the space $Y$ is semi-Hausdorff and hence a $T_1$-space, the set $$E\defeq\{z\in Z:f(z)=f(c)\}$$ is closed in the $Z$.  Repeating the argument of the proof of Claim~\ref{cl:self}, we can show that the set $E\ni c$ is open in $Z$ and hence $E=Z$ by the connectedness of $Z$. Then $f[Z]=f[E]=\{f(c)\}$. Since the connected $T_1$-space $Y$ has no isolated points, the set $Z=Y\setminus\{a,b,f(c)\}$ is dense in $Y$ and hence $f[Y]\subseteq\overline{f[Z]}=\overline{\{f(c)\}}=\{f(c)\}$, which means that the function $f$ is constant.
\smallskip

2. Next, assume that for every point $c\in Y\setminus\{a,b\}$ with $c\ne f(c)$ we have $f(c)\in\{a,b\}$. By Lemma~\ref{l:vertex}, the subspace $Z\defeq Y\setminus\{a,b\}$ of $Y$ is connected. Since $Y$ is a $T_1$-space, the set $E\defeq\{z\in Z:f(z)\in\{a,b\}\}$ is closed in $Z$. We claim that $E$ is open in $Z$. Indeed, given any point $x\in E$, we observe that $x\ne f(x)\in\{a,b\}$. By the definition of the topology $\Tau_Y$, there exist disjoint open sets $U,V\subseteq Y$ such that $x\in U$ and $\{a,b\}\subseteq V$. The continuity of $f$ yields a neighborhood $O_x\subseteq U\cap Z$ of $x$ such that $f[O_x]\subseteq V$. Then $O_x\cap f[O_x]=\emptyset$ and hence $f[O_x]\subseteq\{a,b\}$, according to our assumption. Then $O_x\subseteq E$, witnessing that the set $E$ is open in $Z$. By the connectedness of $Z$, we have $E=\emptyset$ or $E=Z$. If $E=Z$, then the connected subspace $f[Z]\subseteq \{a,b\}$ of the $T_1$-space is a singleton. The density of $Z$ in $Y$ implies that $f[Y]$ is a singleton as well and hence $f$ is constant. If $E=\emptyset$, then for every $z\in Z$ we have $f(z)\notin \{a,b\}$ and hence $f(z)=z$ by our assumption. We claim that $f:Y\to Y$ is the identity map of $Y$. In the opposite case, we can find a point $x\in\{a,b\}$ such that $f(x)\ne x$. 

If $f(x)\notin\{a,b\}$, then by the definition of the topology $\Tau_Y$, there exist two disjoint open sets $U,V\subseteq Y$ such that $f(x)\in U$ and $\{a,b\}\subseteq V$. By the continuity of $f$ at $x\in\{a,b\}$, there exists an open neighborhood $O_x\subseteq V$ of $x$ such that $f[O_x]\subseteq U$. Since the connected $T_1$-space $Y$ has no isolated points, the open subset $O_x$ of $Y$ contains a point $c\in O_x\setminus\{a,b\}$. It follows that $f(c)\in f[O_x]\subseteq U\subseteq Y\setminus\{a,b\}$. Now our assumption guarantees that $f(c)=c\in V$ and hence $f(c)\in U\cap V=\emptyset$. This contradiction shows that $f(x)\in\{a,b\}$ and hence $\{x,f(x)\}=\{a,b\}$. Consider the point $y\defeq f(x)\in\{a,b\}\setminus\{x\}$.  Since the ultrafilters $\F_x$ and $\F_{y}$ are distinct, there exists a set $F_y\in\F_y\setminus\F_x$. By Claims~\ref{cl:base3} and  \ref{cl:base4}, the set $U_y\defeq\{y\}\cup S_a[3\IN]\cup S_a[3F_y\pm1]$ is a neighborhood of $y$ in $Y$. By the continuity of the map $f$ at $x$ there exists a neighborhood $U_x$ of $x$ in $Y$ such that $f[U_x]\subseteq U_y$. By Claims~\ref{cl:base3} and \ref{cl:base4}, there exists a set $F_x\in\F_x$ such that $S_a[3F_x\pm 1]\subseteq U_x$. Since $F_y\notin\F_x$, we can replace $F_x$ by a smaller set in $\F_x$ and additionally assume that $F_x\subseteq \IN\setminus F_y$. Now take any number $n\in F_x$ and consider the element $z\defeq s_a(3n+1)\in U_x\setminus\{a,b\}$. Our assumption implies that $s_a(3n+1)=z=f(z)\in f[U_x]\subseteq U_y=\{y\}\cup S_a[3\IN]\cup S_a[3F_y\pm 1]$ and hence $n\in F_y$, which is impossible as $F_x\cap F_y=\emptyset$. This contradiction shows that $f$ is the identity map of $Y$. 
\end{proof}

\section{The interplay between various $k$-properties of topological spaces}\label{s:US}

In this section we prove nontrivial implications in the diagram, drawn before Theorem~\ref{t:main2}.

\begin{proposition}\label{p:k2=>k1} Every $k_2$-metrizable space $X$ is $k$-metrizable and $T_1$.
\end{proposition}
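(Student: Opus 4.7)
The plan is to handle the two conclusions separately. The implication ``$k_2$-metrizable $\Rightarrow$ $k$-metrizable'' is immediate from the definitions: if $f:M\to X$ is a bijective continuous proper map from a metrizable space $M$ satisfying the extra compact-test property of $k_2$-metrizability, then the same $f$ already witnesses $k$-metrizability. So the substantive content of the proposition is the $T_1$ assertion, and all the work goes there.

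For $T_1$, the strategy is to argue by contradiction using the single compact Hausdorff test space $K\defeq\{0\}\cup\{\tfrac1n:n\in\IN\}\subseteq\IR$. Suppose $X$ is not $T_1$; then there exist distinct points $x,y\in X$ with $y\in\overline{\{x\}}$. I would define $g:K\to X$ by $g(0)\defeq y$ and $g(\tfrac1n)\defeq x$ for every $n\in\IN$, and first verify the continuity of $g$: for any open $U\subseteq X$, if $y\in U$ then $x\in U$ as well (because $y\in\overline{\{x\}}$), so $g^{-1}[U]=K$; if $y\notin U$, then $g^{-1}[U]$ equals either $\{\tfrac1n:n\in\IN\}$ or $\emptyset$, both of which are open in $K$.

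Now I would apply the $k_2$-metrizability of $X$ with witness $f:M\to X$: the composition $f^{-1}\circ g:K\to M$ is continuous. Since $\tfrac1n\to 0$ in $K$, the constant sequence $\bigl(f^{-1}(x)\bigr)_{n\in\IN}$ must converge to $f^{-1}(y)$ in $M$. Because $M$ is metrizable and in particular Hausdorff, uniqueness of sequential limits forces $f^{-1}(x)=f^{-1}(y)$, contradicting the injectivity of $f$ together with $x\ne y$.

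I do not anticipate a serious obstacle; the only conceptual step is to recognize that the non-$T_1$ configuration $y\in\overline{\{x\}}\setminus\{x\}$ is precisely what lets one build a continuous map from the convergent-sequence space $K$ whose composition with $f^{-1}$ collapses two distinct points of $M$, thereby violating the $k_2$-metrizability condition. All remaining verifications are routine.
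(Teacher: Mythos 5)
Your proposal is correct and follows essentially the same route as the paper: the $k$-metrizability claim is read off from the definitions, and the $T_1$ claim is obtained by testing against the convergent-sequence space $\{0\}\cup\{\tfrac1n:n\in\IN\}$ mapped by $0\mapsto y$, $\tfrac1n\mapsto x$ for a non-closed configuration $y\in\overline{\{x\}}\setminus\{x\}$. Your write-up merely spells out in more detail the continuity check for this test map and the reason the composition with $f^{-1}$ cannot be continuous, which the paper leaves implicit.
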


\begin{proof} Being $k_2$-metrizable, the space $X$ admits a continuous proper bijective map $p:M\to X$ from a metrizable space $M$ such that for any continuous map $f:H\to X$ from a compact Hausdorff space $H$ the map $p^{-1}\circ f:H\to M$ is continuous. The map $p$ witnesses that the space $X$ is $k$-metrizable. Assuming that $X$ is not a $T_1$-space, we can find a point $x\in X$ such that the singleton $\{x\}$ is not closed in $X$. Let $y\ne x$ be any point in the closure of $\{x\}$ in $X$. Consider the compact metrizable subspace $H=\{0\}\cup\{\frac1n:n\in\IN\}$ of the real line and the function
$f:H\to \{x,y\}\subseteq X$ defined by $f(0)=y$ and $f(\frac1n)=x$ for all $n\in\IN$. It follows from $y\in\overline{\{x\}}$ that the function $f$ is continuous. On the other hand, the function $p^{-1}\circ f:H\to \{x,y\}\subseteq M$ is discontinuous, which contradicts the choice of the map $p$. This contradiction shows that $X$ is a $T_1$-space.
\end{proof}

\begin{proposition}\label{p:k2d=>kd} A topological space $X$ is $k_2$-discrete if and only if $X$ is a $k$-discrete $T_1$-space.
\end{proposition}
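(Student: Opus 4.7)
The plan is to handle the two directions separately; both should turn out to be short. For the ``only if'' direction, $k$-discreteness is immediate from the definition of $k_2$-discreteness, since discarding the extra lifting condition leaves exactly a bijective continuous proper map from a discrete space. The $T_1$ property follows by the verbatim argument of Proposition~\ref{p:k2=>k1}: if $X$ were not $T_1$, one could find distinct $x,y\in X$ with $y\in\overline{\{x\}}$ and define a continuous map $f\colon H\to X$ on the compact metrizable space $H\defeq\{0\}\cup\{\tfrac1n:n\in\IN\}\subseteq\IR$ by $f(0)\defeq y$ and $f(\tfrac1n)\defeq x$. The composition $p^{-1}\circ f$ would then send the convergent sequence $(\tfrac1n)_{n\in\IN}$ to the constant value $p^{-1}(x)$ but send the limit $0$ to the distinct value $p^{-1}(y)$, which is impossible for a continuous map into a discrete space, contradicting $k_2$-discreteness.

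For the ``if'' direction, I would fix a bijective continuous proper map $p\colon M\to X$ from a discrete space $M$ and an arbitrary continuous map $g\colon K\to X$ from a compact Hausdorff space $K$, and aim to show that $p^{-1}\circ g$ is continuous. The key observation is that $g[K]$ must be finite: being a continuous image of a compact space it is compact in $X$, so by properness of $p$ its preimage $p^{-1}[g[K]]$ is a compact subset of the discrete space $M$ and is therefore finite; bijectivity of $p$ transfers this finiteness back to $g[K]$.

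Once finiteness of $g[K]$ is in hand, the $T_1$ hypothesis finishes things off cleanly: for each $z\in g[K]$ the fiber $g^{-1}(z)$ is closed in $K$ by continuity of $g$, and since $g[K]$ is finite this same fiber is also the complement of the finitely many remaining closed fibers, hence clopen. Consequently $p^{-1}\circ g\colon K\to M$ is locally constant and therefore continuous into the discrete space $M$, which is exactly the lifting condition required for $k_2$-discreteness. I do not anticipate any real obstacle; the only step that needs any thought is the collapse of $g[K]$ to a finite set via properness and discreteness of $M$, after which $T_1$-ness of $X$ is used solely to make the finitely many fibers of $g$ clopen in $K$.
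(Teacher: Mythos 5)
Your proposal is correct and follows essentially the same route as the paper: the ``only if'' direction is the argument of Proposition~\ref{p:k2=>k1} (which the paper simply cites), and the ``if'' direction hinges on the same key point, namely that properness of $p$ forces $f[H]$ to be finite, hence (by $T_1$) a finite discrete subspace, so that $p^{-1}\circ f$ is continuous. Your phrasing via clopen fibers and local constancy is just a mild repackaging of the paper's composition $p^{-1}{\restriction}_K\circ f$.
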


\begin{proof} The ``only if'' part follows from Proposition~\ref{p:k2=>k1}. To prove the ``if'' part, assume that $X$ is a $k$-discrete $T_1$-space. Then there exists a bijective proper map $p:D\to X$ from a discrete space $D$. We claim that the map $p$ witnesses that the space $X$ is $k_2$-discrete. Indeed, take any continuous map $f:H\to X$ from a compact Hausdorff space. By the continuity of $f$, the image $K=f[H]$ is a compact subset of $X$. The properness of the map $p$ ensures that $p^{-1}[K]$ is a compact subset of the discrete space $D$. Then $K$ is finite and by the $T_1$ separation property of $X$, $K$ is a finite discrete subspace of $X$. Then the map $i^{-1}\circ f:H\to D$ is continuous, being the composition of the continuous maps $f:H\to X$ and $i^{-1}{\restriction}_K:K\to D$. This shows that the space $X$ is $k_2$-discrete.
\end{proof}

Propositions~\ref{p:k2=>k1} and \ref{p:k2d=>kd} motivate the following question.

\begin{question} Is every $k$-metrizable $T_1$-space $k_2$-metrizable?
\end{question}

\begin{proposition} Every $k_2$-metrizable space $X$ is $k_\Delta$-Hausdorff.
\end{proposition}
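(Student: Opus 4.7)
The plan is to reduce the problem to the (trivial) fact that in a Hausdorff space the diagonal is closed, by lifting everything to the witnessing metric space. Let $p\colon M\to X$ be a continuous proper bijection from a metrizable space $M$ witnessing the $k_2$-metrizability of $X$, so that for every continuous map $g\colon K\to X$ from a compact Hausdorff space $K$ the composition $p^{-1}\circ g\colon K\to M$ is continuous.

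To verify the $k_\Delta$-Hausdorff property, I fix a continuous map $f\colon K\to X\times X$ from a compact Hausdorff space $K$ and must show that $f^{-1}[\Delta_X]$ is closed in $K$. Let $\pi_1,\pi_2\colon X\times X\to X$ be the coordinate projections and set $f_i\defeq\pi_i\circ f\colon K\to X$ for $i\in\{1,2\}$. Each $f_i$ is continuous, so by $k_2$-metrizability the lifts $\tilde f_i\defeq p^{-1}\circ f_i\colon K\to M$ are continuous. Combining them produces a continuous map
$$h\colon K\to M\times M,\qquad h(k)\defeq(\tilde f_1(k),\tilde f_2(k)).$$

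The key observation is that, since $p$ is a bijection, $h(k)\in\Delta_M$ if and only if $\tilde f_1(k)=\tilde f_2(k)$, if and only if $f_1(k)=f_2(k)$, if and only if $f(k)\in\Delta_X$. Hence $f^{-1}[\Delta_X]=h^{-1}[\Delta_M]$. Because $M$ is metrizable and in particular Hausdorff, $\Delta_M$ is closed in $M\times M$, so $h^{-1}[\Delta_M]$ is closed in $K$ by continuity of $h$. This gives closedness of $f^{-1}[\Delta_X]$ in $K$, establishing the $k_\Delta$-Hausdorff property.

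There is no substantial obstacle here: the $k_2$-metrizability hypothesis is precisely what is needed to pull a compact-Hausdorff test map up through the bijection $p$, and once one is upstairs in the metric space $M$, the closedness of the diagonal does all the remaining work. The only thing to double-check is the set-theoretic identity $(p\times p)^{-1}[\Delta_X]=\Delta_M$, which holds exactly because $p$ is injective.
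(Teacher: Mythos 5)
Your proof is correct and follows essentially the same route as the paper: project $f$ to two coordinate maps, lift them through $p^{-1}$ using the $k_2$-metrizability hypothesis, and conclude from the Hausdorffness of $M$ that their equalizer (equivalently, the preimage of $\Delta_M$ under your map $h$) is closed. The paper phrases this as the closedness of the coincidence set $\{x:p^{-1}\circ f_1(x)=p^{-1}\circ f_2(x)\}$ rather than explicitly forming $h$ and $\Delta_M$, but the argument is identical.
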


\begin{proof} Assume that $X$ is $k_2$-metrizable. To show that $X$ is $k_\Delta$-Hausdorff, take any continuous map $f:H\to X\times X$ from a compact Hausdorff space $H$. Let $\pi_1,\pi_2:X\times X\to X$ be the coordinate projections and $f_i=\pi_i\circ f:H\to X$ for every $i\in\{1,2\}$. By the $k_2$-metrizability of $X$, there exists a bijective continuous proper map $p:M\to X$ from a metrizable space $M$ such that the maps $p^{-1}\circ f_1$ and $p^{-1}\circ f_2$ are continuous. By the Hausdorff property of $M$, the set $f^{-1}[\Delta_X]=\{x\in H:f_1(x)= f_2(x)\}=\{x\in H:p^{-1}\circ f_1(x)=p^{-1}\circ f_2(x)\}$ is closed, witnessing that the space $X$ is $k_\Delta$-Hausdorff.
\end{proof}

\begin{proposition} Every $k_\Delta$-Hausdorff space $X$ is a US-space.
\end{proposition}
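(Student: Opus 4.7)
The plan is to convert a sequence with two limits into a continuous map from a compact Hausdorff space and exploit the $k_\Delta$-Hausdorff property applied to the one-point compactification of a countable discrete set.

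Assume, for contradiction, that $(x_n)_{n\in\w}$ is a sequence in $X$ that converges to two distinct points $x,y\in X$. Consider the compact Hausdorff space $K\defeq\w\cup\{\infty\}$, the one-point compactification of the discrete space $\w$. Define two maps $f_1,f_2:K\to X$ by $f_1(n)=f_2(n)=x_n$ for $n\in\w$, $f_1(\infty)=x$ and $f_2(\infty)=y$. The continuity of $f_1$ at each isolated point $n\in\w$ is automatic, and continuity at $\infty$ follows directly from the convergence $x_n\to x$ (each neighborhood of $x$ contains all but finitely many $x_n$); similarly $f_2$ is continuous.

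Next, combine these into the continuous map $f\defeq(f_1,f_2):K\to X\times X$. By the $k_\Delta$-Hausdorff property of $X$, the preimage $f^{-1}[\Delta_X]=\{t\in K:f_1(t)=f_2(t)\}$ is closed in $K$. For every $n\in\w$ we have $f_1(n)=x_n=f_2(n)$, so $\w\subseteq f^{-1}[\Delta_X]$. Since $\w$ is dense in $K$ and $f^{-1}[\Delta_X]$ is closed, we conclude that $\infty\in f^{-1}[\Delta_X]$, i.e.\ $x=f_1(\infty)=f_2(\infty)=y$, contradicting $x\ne y$.

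This proof is essentially a one-shot application of the definition; the only thing worth double-checking is the continuity of $f_1$ and $f_2$ at $\infty$, which amounts to rewriting the convergence of $(x_n)$ in terms of basic neighborhoods of $\infty$ in $K$. No further obstacle is anticipated.
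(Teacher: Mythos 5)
Your proof is correct and is essentially the same as the paper's: the one-point compactification $\omega\cup\{\infty\}$ is homeomorphic to the paper's choice $H=\{0\}\cup\{\frac1n:n\in\IN\}$, and your map $(f_1,f_2)$ is exactly the paper's $f$ with $f(\tfrac1n)=(x_n,x_n)$ and $f(0)=(a,b)$. The only cosmetic difference is that you deduce the contradiction via density of $\omega$ in $K$, while the paper directly observes that the preimage of the diagonal is the non-closed set $\{\frac1n:n\in\IN\}$.
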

 
\begin{proof}  Assuming that $X$ is not a $US$-space, we can find a sequence $(x_n)_{n\in\IN}$ in $X$ that converges to two distinct points $a,b\in X$. Consider the compact Hausdorff space $H=\{0\}\cup\{\frac1n:n\in\IN\}\subseteq\IR$ and the continuous function $f:H\to X\times X$ such that $f(0)=(a,b)$ and $f(\frac1n)=(x_n,x_n)$ for all $n\in\IN$. Since the preimage $f^{-1}[\Delta_X]=\{\frac1n:n\in\IN\}$ is not closed in $H$, the space $X$ is not $k_\Delta$-Hausdorff.
\end{proof}

\begin{proposition}\label{p:k=>US} Every $k$-metrizable $T_1$-space $X$ is a US-space.
\end{proposition}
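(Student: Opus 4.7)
The plan is a proof by contradiction, transferring the failure of the US property along the witnessing proper bijection.

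Suppose $X$ is a $k$-metrizable $T_1$-space which is not a US-space, so some sequence $(x_n)_{n\in\IN}$ in $X$ converges to two distinct points $a,b$. Since any constant sequence in a $T_1$-space converges only to its value, each element of $X$ is attained at most finitely often among the $x_n$; passing to a subsequence I may assume that the $x_n$ are pairwise distinct and lie in $X\setminus\{a,b\}$. Let $p\colon M\to X$ witness the $k$-metrizability of $X$, with $M$ metrizable, and write $\alpha\defeq p^{-1}(a)$, $\beta\defeq p^{-1}(b)$, $m_n\defeq p^{-1}(x_n)$; these are pairwise distinct since $p$ is a bijection.

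The heart of the proof is the claim that $(m_n)\to\alpha$ in $M$. The set $K_a\defeq\{a\}\cup\{x_n:n\in\IN\}$ is compact in $X$, because any open cover has a member containing $a$ and hence a cofinite portion of $\{x_n\}$. By properness of $p$, the preimage $p^{-1}[K_a]=\{\alpha\}\cup\{m_n:n\in\IN\}$ is compact in $M$, so it is a compact metrizable space. Suppose toward contradiction that $(m_n)$ does not converge to $\alpha$. By sequential compactness of $p^{-1}[K_a]$ and the Hausdorffness of $M$, some subsequence $(m_{n_{k_j}})_j$ converges to a point $m^*\in p^{-1}[K_a]$ different from $\alpha$, so $m^*=m_{n_0}$ for some $n_0\in\IN$. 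Discarding finitely many terms I may assume $n_{k_j}>n_0$ for all $j$, so $m_{n_{k_j}}\ne m_{n_0}$ and $x_{n_{k_j}}\ne x_{n_0}$ for every $j$.

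Now I invoke properness a second time on a carefully chosen compact set. The subsequence $(x_{n_{k_j}})$ still converges to $a$ in $X$, so $K'\defeq\{a\}\cup\{x_{n_{k_j}}:j\in\IN\}$ is compact in $X$, and by properness $p^{-1}[K']=\{\alpha\}\cup\{m_{n_{k_j}}:j\in\IN\}$ is compact, hence closed in the Hausdorff space $M$. Since $(m_{n_{k_j}})_j\to m_{n_0}$ in $M$ and lies in the closed set $p^{-1}[K']$, the point $m_{n_0}$ must belong to $p^{-1}[K']$. Yet $m_{n_0}\ne\alpha$ and $m_{n_0}\ne m_{n_{k_j}}$ for every $j$ by the choice of indices, so $m_{n_0}\notin p^{-1}[K']$, a contradiction. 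This proves $(m_n)\to\alpha$ in $M$, and symmetrically $(m_n)\to\beta$. The Hausdorffness of $M$ then gives $\alpha=\beta$ and hence $a=b$, the desired contradiction.

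The subtle point is the apparent circularity in the key claim: if $(m_n)\not\to\alpha$, the reduction only produces another sequence in $X$ converging to two distinct points ($a$ and $x_{n_0}$), so one cannot immediately invoke any US-type statement. The resolution is to apply properness a second time to a compact set $K'$ that by construction excludes $x_{n_0}$, so that $p^{-1}[K']$ is closed in $M$ without containing the unwanted metric limit $m_{n_0}$.
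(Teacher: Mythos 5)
Your proof is correct. It takes a recognizably different route from the paper's, though both arguments run on the same engine: properness applied to compacta of the form ``convergent sequence plus its limit,'' combined with the metrizability of $M$. The paper applies properness to $K_a=\{a\}\cup\{x_n\}_{n\in\IN}$ and to a second compactum $K_b=\{b\}\cup\{x_n:x_n\ne p(c)\}$ built around the \emph{other} limit point, where $c$ is a non-isolated point of the infinite compact metrizable set $p^{-1}[K_a]$; the intersection $F=p^{-1}[K_a]\cap p^{-1}[K_b]=\{p^{-1}(x_n):x_n\ne p(c)\}$ is then compact but not closed in $M$, since $c\in\overline{F}\setminus F$ --- impossible in a metrizable space. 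You instead factor the argument through an intermediate transfer lemma: the preimage sequence $\bigl(p^{-1}(x_n)\bigr)_{n\in\IN}$ converges in $M$ to $p^{-1}(a)$. You prove this by applying properness a second time to a sub-compactum $\{a\}\cup\{x_{n_{k_j}}:j\in\IN\}$ deliberately chosen to exclude the putative wrong limit $x_{n_0}$, so that its closed preimage cannot contain $m_{n_0}$; by symmetry the same sequence also converges to $p^{-1}(b)$, and the Hausdorffness of $M$ closes the argument. The two distinct limit points thus enter the paper's proof inside the compact sets themselves, whereas in yours they appear only at the final step. What your decomposition buys is a clean reusable statement --- a proper continuous bijection from a metrizable space reflects convergence of injective sequences --- at the cost of a slightly longer subsequence bookkeeping; the paper's version is a more compact one-shot contradiction.
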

 
\begin{proof}  By the $k$-metrizability of $X$, there exists a proper continuous bijective map $p:M\to X$, defined on a metrizable space $M$. Assuming that $X$ is not a $US$-space, we can find a sequence $(x_n)_{n\in\IN}$ in $X$ that converges to two distinct points $a,b\in X$. Since $X$ is a $T_1$-space, the set $\{x_n\}_{n\in\IN}$ is infinite. Replacing $(x_n)_{n\in\w}$ by a suitable subsequence, we can assume that $\{a,b\}\cap\{x_n\}_{n\in\w}=\emptyset$ and $x_n\ne x_m$ for any distinct numbers $n,m\in\w$. Since the sequence $(x_n)_{n\in\omega}$ converges to $a$, the set $K_a=\{a\}\cup\{x_n\}_{n\in\w}\subseteq X$ is compact and  its preimage $p^{-1}[K_a]$ is an infinite compact subset of the metrizable space $M$. Let $c$ be a non-isolated point of the compact infinite set $p^{-1}[K]$. Consider the infinite set $\Omega\in\{n\in\w:x_n\ne p(c)\}$ and the compact subset $K_b=\{b\}\cup\{x_n\}_{n\in\Omega}$ of $X$. By the properness, the set $p^{-1}[K_b]$ is compact and then $F=p^{-1}[K_b]\cap p^{-1}[K_a]=\{p^{-1}(x_n)\}_{n\in\Omega}$ is compact, too, which is not true  because $c\in \overline F\setminus F$. This contradiction shows that $X$ is  a $US$-space.
\end{proof}

The following simple example shows that the $T_1$-restriction cannot be removed from Proposition~\ref{p:k=>US}.

\begin{example} The space $X=\{0,1\}$ with the topology $\{\emptyset,\{0\},\{0,1\}\}$ is $k$-metrizable but not a US-space because the constant sequence $(x_n)_{n\in\w}\in\{0\}^\w$ converges to $0$ and to $1$ simultaneously.
\end{example} 
\newpage



\end{document}